\theoremstyle{definition}
\newtheorem{lemma}{Lemma}[section]
\newtheorem{definition}[lemma]{Definition}
\newtheorem{theorem}[lemma]{Theorem}
\newtheorem{example}[lemma]{Example}
\newtheorem{proposition}[lemma]{Proposition}
\newtheorem{corollary}[lemma]{Corollary}
\newtheorem{remark}[lemma]{Remark}
\newcommand{\F}{{\mathbb F}}
\newcommand{\im}{\operatorname{Im}}
\newcommand{\Ker}{\operatorname{Ker}}
\newcommand{\Spec}{\operatorname{Spec}}
\title{\bf  On specializations of minimal $p$-divisible groups}
\author{Nobuhiro Higuchi\thanks{Graduate School of Environment and Information Sciences, Yokohama National University, 
79-1 Tokiwadai, Hodogaya-ku, Yokohama 240-8501 JAPAN.
E-mail: \texttt{higuchi-nobuhiro-tc@ynu.jp}}
\ and 
Shushi Harashita\thanks{Graduate School of Environment and Information Sciences, 
Yokohama National University,
79-1 Tokiwadai, Hodogaya-ku, Yokohama 240-8501 JAPAN.
E-mail: \texttt{harasita@ynu.ac.jp}}}
\begin{document}

\maketitle

\begin{abstract}

In this paper, for any pair $(\zeta, \xi)$ of Newton polygons 
with $\zeta \prec \xi$, we construct a concrete specialization 
from the minimal $p$-divisible group of $\xi$ 
to the minimal $p$-divisible group of $\zeta$ by a beautiful induction. 
This in particular gives the affirmative answer to the unpolarized analogue of 
a question by Oort on the boundaries of central streams,
and gives another proof of the dimension formula of the central leaves in the unpolarized case.

\end{abstract}

\footnote[0]{2010 Mathematics Subject Classification : Primary:14L15 Group schemes; 
Secondary:14L05 formal groups,
$p$-divisible group; 14K10 algebraic moduli, classification.}
\footnote[0]{Key words and phrases : $p$-divisible groups; deformation space; Newton polygons.}

\section{Introduction}
Let $p$ be a rational prime.
In this paper,
by a $p$-divisible group
we mean a Barsotti-Tate group in algebraic and arithmetic geometry,
i.e., an inductive limit of finite algebraic group schemes
having some properties.
The precise definition of $p$-divisible groups
will be reviewed at the begining of Section \ref{Section2_Background}.
We study $p$-divisible groups in characteristic $p$.
By the Dieudonn\'e-Manin classification (cf.~\cite{Manin}),
the isogeny classes of $p$-divisible groups over an algebraically closed field
in characteristic $p$ are classified by Newton polygons,
see Definition \ref{DefNewtonPolygon} for the definition of Newton polygons.

Let $\xi$ be a Newton polygon.
Among $p$-divisible groups having Newton polygon $\xi$, 
there is a special $p$-divisible group which is called minimal.
We denote it by $H(\xi)$.
The main reference for minimal $p$-divisible groups is Oort \cite{oort2}.
As proposed in the latter part of \cite[Question~6.10]{oort}, 
$H(\zeta)$ was expected to appear as a specialization of $H(\xi)$ 
for $\zeta \prec \xi$, see Definition \ref{DefNewtonPolygon}
for the notation about Newton polygons.
Our main theorem (Theorem~\ref{theorem1}) implies
that the expectation is true:

\begin{corollary}\label{corollary1}
If $\zeta \prec \xi$, then
$H(\zeta)$ appears as a special fiber of
a $p$-divisible group having $H(\xi)$
as geometric generic fiber.
\end{corollary}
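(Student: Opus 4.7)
The strategy is to deduce Corollary~\ref{corollary1} from the main theorem (Theorem~\ref{theorem1}), which is announced as constructing a concrete specialization from $H(\xi)$ to $H(\zeta)$. Interpreting a ``specialization'' as a $p$-divisible group $\mathcal{X}$ over a complete DVR $R$ (with algebraically closed residue field of characteristic $p$) whose geometric generic fiber is $H(\xi)$ and whose special fiber is $H(\zeta)$, the corollary is an immediate translation: the family $\mathcal{X}$ supplied by Theorem~\ref{theorem1} is exactly what the corollary asks for.

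The substantive content therefore lies in Theorem~\ref{theorem1}, which I would attack by induction on the combinatorial distance between $\zeta$ and $\xi$ in the partial order $\prec$. The base case $\zeta = \xi$ is handled by the constant family. For the inductive step, I would first reduce to the case of a pair $\zeta \prec \xi$ that is \emph{adjacent} in $\prec$ (no Newton polygon lies strictly between them): given a chain $\zeta = \zeta_0 \prec \zeta_1 \prec \cdots \prec \zeta_n = \xi$ of covering relations, specializations can be composed (either by concatenating DVRs or by working over a higher-dimensional base and taking successive generic points) to produce a one-step specialization.

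For an adjacent pair, I would carry out the construction explicitly on Dieudonn\'e modules. Using the standard description of $H(\xi)$ as a direct sum of the simple minimal $p$-divisible groups $H_{a,b}$, an adjacent move in $\prec$ corresponds to ``merging'' two slope segments into their average. The deformation can then be presented as a one-parameter family of $F$-crystals in which the Frobenius matrix is perturbed by a term proportional to the uniformizer of $R$, chosen so that the generic Newton polygon is $\xi$ and the special one is $\zeta$. Newton--Hodge decompositions and the explicit form of the minimal Dieudonn\'e modules in Oort~\cite{oort2} should make the slope computation routine once the correct perturbation is written down.

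The main obstacle I anticipate is ensuring that the special fiber is not merely \emph{some} $p$-divisible group of Newton polygon $\zeta$ but specifically the \emph{minimal} one $H(\zeta)$. The central stream (the locus of minimal $p$-divisible groups) is a comparatively thin subset of the Newton stratum, so a naive perturbation tends to overshoot into non-minimal $p$-divisible groups at the special fiber. Controlling this rigidity will require a carefully chosen basis of the Dieudonn\'e module so that, after specialization, the endomorphism structure and $a$-invariant characteristic of $H(\zeta)$ are preserved — and this control must be propagated through each step of the induction, which is presumably where the ``beautiful induction'' mentioned in the abstract earns its name.
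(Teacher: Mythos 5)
There is a genuine gap, and it sits exactly where you flagged it and then deferred it. First, the deduction of the corollary from Theorem~\ref{theorem1} is not the ``immediate translation'' you describe, because Theorem~\ref{theorem1} is a statement about $p$-kernels, not about families of $p$-divisible groups: it produces a chain $A_\zeta=A^{(0)}<\cdots<A^{(c)}=A_\xi$ of ${\rm DM_1}$'s (elements of $\{0,1\}^h$) related by elementary exchanges of adjacent ``$0\,1$'' for ``$1\,0$''. To pass from this to the corollary the paper still needs three further ingredients, none of which appear in your plan: (i) the fact (from the proof of \cite[Proposition~11.1]{oort1}) that $B\le A$ implies $B\preceq A$, i.e.\ the existence of a ${\rm DM_1}$ family over $R=k[\![t]\!]$ with special fiber $N_\zeta$ and geometric generic fiber $N_\xi$; (ii) a lifting of this ${\rm DM_1}$ family to a display over $R$ (\cite{harashita2}, Lemma~4.1) together with Zink's theorem to obtain an actual $p$-divisible group $\mathcal X$ over $R$; (iii) Oort's minimality theorem from \cite{oort2}, that ${\mathbb D}(X[p])\simeq N_\xi$ forces $X\simeq H(\xi)$, applied to both fibers, plus a preliminary reduction splitting off the \'etale part of $\zeta$.

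Second, your own route to the theorem founders on the obstacle you name: guaranteeing that the special fiber is the minimal group $H(\zeta)$ rather than an arbitrary group of Newton polygon $\zeta$. Perturbing the Frobenius matrix and invoking Newton--Hodge decompositions controls slopes, but a ``carefully chosen basis'' preserving the endomorphism structure and $a$-number is not an argument; controlling the full Dieudonn\'e module along the degeneration is precisely the hard problem the paper avoids. The paper's key idea is to control only the $p$-kernel, where minimality is an explicit finite combinatorial condition on the sequence in $\{0,1\}^h$, and then let Oort's theorem (minimal $p$-kernel $\Rightarrow$ minimal $p$-divisible group) dispose of the rigidity issue for both fibers at once. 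In addition, your reduction to ``adjacent'' pairs via merging two slope segments into their average misdescribes the covering relations: for a two-segment $\xi$, a saturated $\zeta\prec\xi$ generally has many segments (by Proposition~\ref{prop4} their number equals $c(\zeta,\xi)$), so the one-parameter ``merge'' deformation does not even produce the correct special Newton polygon at each step; and the composition of one-parameter specializations over DVRs is itself a point needing justification, which the paper sidesteps by composing at the combinatorial ${\rm DM_1}$ level before lifting once.
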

The converse of this corollary is also true, which is a consequence of
Grothendieck-Katz \cite{Katz}, Theorem 2.3.1.

Now several proofs of this corollary have been known
(cf. \cite{VW}, Proposition 1.8 and \cite{SNP}, Corollary 5.1),
but the authors could not find any known result which implies
Theorem~\ref{theorem1}.
Also the proof of Theorem~\ref{theorem1} 
has an advantage of giving a very concrete
construction of such specializations.
For example it would be interesting to study relations between the construction
and that in the proof in \cite[Corollary 5.1]{SNP}.
The method of our proof is based on the idea of \cite{harashita1}, where
the second auther proved
the similar result in the polarized case with application to the theory of
stratifications of the moduli space of principally polarized abelian varieties. Remark that, combining Corollary \ref{corollary1} and \cite{harashita2}, Theorem 1.1, one can prove the unpolarized analogue of Oort's conjecture \cite[6.9]{oort}, see \cite{SNP}, Corollary 5.2.
From Theorem~\ref{theorem1}, we can also give a new proof of
the dimension formula of central leaves in the unpolarized case,
see Corollary~\ref{CorDimensionFormula}.

The essential case for the proof of the main theorem is that 
$\xi$ consists of two segments and $\zeta \prec \xi$ is saturated. 
The case that the slopes of $\xi$, say $\lambda_2 < \lambda_1$, 
satisfy $\lambda_2<\lambda_1 \leq 1/2$ or
$1/2 \leq \lambda_2 < \lambda_1$
has been proved in \cite{harashita1}, 8.4.
This paper confirms that the above theorem holds in the remaining case 
$\lambda_2 < 1/2 < \lambda_1$.

This paper is organized as follows.
In Section 2, we recall the definition of 
$p$-divisible groups and 
truncated Dieudonn\'e modules of level one (abbreviated as ${\rm DM_1}$), 
which are Dieudonn\'e modules of $p$-kernels of $p$-divisible groups.
We also recall the definitions of 
(${\rm DM_1}$-)simple ${\rm DM_1}$'s, their direct sums and minimal ${\rm DM_1}$'s. 
In Section 3, we recall some facts on specializations of ${\rm DM_1}$'s.
In Section 4, we treat some combinatorics on Newton polygons. 
In Section 5, we state our main results. 
In Section 6, we give a proof of the main theorem (Theorem~\ref{theorem1})
whose beautiful induction would hopefully influence some future works. 

\section{Background}\label{Section2_Background}
Let $p$ be a rational prime, and $h$ a non-negative integer.
Let $S$ be a scheme. 
A {\it $p$-divisible group (Barsotti-Tate group)} of height $h$ over $S$
is an inductive
system $X=(G_\nu,i_\nu)$ ($\nu=0,1,2,\cdots$),
where $G_\nu$ is a
finite locally free commutative group scheme over $S$ of order $p^{\nu h}$
and, for each $\nu$ the sequence of commutative group schemes
\[
\begin{CD}
0 @>>> G_\nu @>i_\nu>> G_{\nu+1} @>p^\nu>> G_{\nu+1}
\end{CD}
\]
is exact, that is to say, $G_\nu$ is identified via $i_\nu$ with
$\Ker(p^\nu: G_{\nu+1} \to G_{\nu+1})$.

For a $p$-divisible group $X=(G_\nu,i_\nu)$ over $S$ and
for a morphism $T \to S$ of schemes,
we have a $p$-divisible group $X_T$ over $T$
defined by $(G_\nu\times_S T,i_\nu\times {\rm id})$.
In particular for a closed point $s=\Spec(k)\to S$
the $p$-divisible group $X_s$ over $k$ is called
the {\it fiber} of $X$ over $s$.
A $p$-divisible group obtained as the fiber of $X$ over a closed point
is said to be {\it a special fiber of $X$}.

Let $S$ be an $\F_p$-scheme.
For any $S$-scheme $T$, 
let ${\rm Frob}$ be the absolute frobenius on $T$
and ${\rm Fr}: T \to T^{(p)}$ be the relative frobenius.

A {\it truncated Bartotti-Tate group of level one} (${\rm BT_1}$)
over $S$ is a finite locally free commutative group scheme over $S$
such that $\im({\rm Ver}: G^{(p)}\to G) = \Ker({\rm Fr}:G\to G^{(p)})$
and $\Ker({\rm Ver}: G^{(p)}\to G) = \im({\rm Fr}:G\to G^{(p)})$,
where $G^{(p)} = G\times_{S,{\rm Frob}}S$ and
${\rm Ver}$ is the Verschebung on $G$.
The $p$-kernel $X[p] := {\rm Ker} (p : X \rightarrow X)$
of a $p$-divisible group $X$ is a ${\rm BT_1}$,
and any ${\rm BT_1}$ over an algebraically closed field
is the $p$-kernel of a $p$-divisible group.

Let $k$ be a perfect field of characteristic $p$. 
Let $W(k)$ denote the ring of Witt vectors with coefficients in $k$.
Let $\sigma$ be the frobenius map on $k$,
and use the same symbol $\sigma$ for the frobenius map on $W(k)$.
A {\it Dieudonn\'e module} over $k$ is a finite $W(k)$-module $M$
equipped with $\sigma$-linear homomorphism $F: M \to M$
and $\sigma^{-1}$-linear homomorphism $V: M \to M$
such that $F\circ V$ and $V\circ F$ are equal to the multiplication by $p$.
In this paper we use the covariant Dieudonn\'e theory.
It says that there is an equivalence from the category of
$p$-divisible group (resp. $p$-torsion finite commutative group schemes) over $k$ to that of
Dieudonn\'e modules which is free (resp. is of finite length) as $W(k)$-modules.
In this paper, Dieudonn\'e modules corresponding to ${\rm BT_1}$'s
via the Dieudonn\'e functor
is called ${\rm DM_1}$'s. The precise definition of them is as follows.
\begin{definition}\label{DefDM1}

A truncated Dieudonn\'e module of level one (abbreviated as ${\rm DM_1}$) 
over $k$ of height $h$ is a triple $(N, F, V)$ consisting of 
a $k$-vector space $N$ of dimension $h$, 
a $\sigma$-linear homomorphism $F:N \rightarrow N$ and 
a ${\sigma}^{-1}$-linear homomorphism $V:N \rightarrow N$ satisfying 
$\mathop{\rm Ker} F=\mathop{\rm Im} V$ and 
$\mathop{\rm Im} F=\mathop{\rm Ker} V$. 

\end{definition}


When $k$ is an algebraically closed field,
the following theorem is known
(cf. Kraft \cite{kraft}, Oort \cite{oort1} and Moonen-Wedhorn \cite{moonen-wedhorn}). 

\begin{theorem}\label{theorem2}

There exists a bijection:

  \begin{center}
    
    $\{0,1\}^h\longleftrightarrow \{{\rm DM_1} \ $over$\ k\ $of height$\ h\}/\cong.$
  
  \end{center}

\end{theorem}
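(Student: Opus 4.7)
The plan is to set up the bijection by constructing, for each $w \in \{0,1\}^h$, an explicit model ${\rm DM}_1$ $N(w)$, and by extracting from any ${\rm DM}_1$ a canonical word via a canonical filtration that determines the isomorphism class.

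First, for each word $w = (w_1, \ldots, w_h) \in \{0,1\}^h$, I would construct $N(w)$ as the $k$-vector space with basis $e_1, \ldots, e_h$ equipped with the flag $N_i = \langle e_1, \ldots, e_i \rangle$, and define the $\sigma$-linear $F$ and $\sigma^{-1}$-linear $V$ so that the positions with $w_i = 1$ correspond to $F$-contributions to the flag and those with $w_i = 0$ to $V$-contributions, with all structure constants normalized to $1$. Concretely, one partitions $\{1, \ldots, h\}$ into $F$-cycles and $V$-cycles according to $w$ and lets $F$ (respectively $V$) permute the basis vectors indexed by its cycles while killing the complementary basis vectors. By construction, $\operatorname{Im} F$ and $\ker V$ are both spanned by $\{e_i : w_i = 1\}$, and $\operatorname{Im} V$ and $\ker F$ are both spanned by $\{e_i : w_i = 0\}$, so $N(w)$ is indeed a ${\rm DM}_1$.

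Next, for an arbitrary ${\rm DM}_1$ $(N, F, V)$, I would produce a canonical full flag $0 = N_0 \subsetneq N_1 \subsetneq \cdots \subsetneq N_h = N$ by iteratively applying $F$ and $V$, starting from the $F$- and $V$-stable subspaces $\operatorname{Im} F = \ker V$ and $\operatorname{Im} V = \ker F$ and refining by a canonical rule. Labeling each step $i$ by $w_i \in \{0,1\}$ according to whether the increment $N_i / N_{i-1}$ is produced by an $F$-operation or a $V$-operation yields a word $w(N) \in \{0,1\}^h$ which depends only on the isomorphism class of $N$ and which agrees with $w$ when applied to $N(w)$.

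Finally, to show $N \cong N(w(N))$, I would pick a basis $v_1, \ldots, v_h$ adapted to the canonical flag. On such a basis, $F$ and $V$ act by $v_i \mapsto c_i v_{\pi(i)}$ (or send $v_i$ to $0$) for scalars $c_i \in k^\times$ and a combinatorial map $\pi$ determined by $w(N)$; rescaling $v_i \mapsto \lambda_i v_i$ transforms each $c_i$ into $c_i \lambda_{\pi(i)} / \lambda_i^{p^{\pm 1}}$, and requiring these to equal $1$ leads, after composing around each cycle of $\pi$, to equations of the form $\lambda^{p^a - 1} = c$ in $k^\times$. Since $k$ is algebraically closed, all such roots exist, and the normalization can be carried out simultaneously, yielding the desired isomorphism with $N(w(N))$. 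The main obstacle is the combinatorial bookkeeping needed to define the canonical flag in a way that is unambiguous and functorial under isomorphism, together with tracking the cycles of $\pi$ so that the rescaling is consistent; this is precisely where the algebraic closedness hypothesis is used in an essential way.
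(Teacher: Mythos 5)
Your overall strategy (attach to each word an explicit model, extract a word from an arbitrary ${\rm DM_1}$ via the canonical filtration, then normalize structure constants using that $k$ is algebraically closed) is the standard one, and it is more than the paper does: the paper does not prove Theorem~\ref{theorem2} at all, but quotes it from Kraft \cite{kraft}, Oort \cite{oort1} and Moonen--Wedhorn \cite{moonen-wedhorn}, and only records explicitly the map $A\mapsto (N,F,V)$ in \eqref{DefOfF}--\eqref{DefOfV}. The difficulty is that the step you pass over in one sentence is precisely the substance of those references, so as written there is a genuine gap.

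Concretely, two points. First, the filtration obtained by iterating $F$ and $V^{-1}$ starting from $0\subset N$ (equivalently from $\im F=\Ker V$ and $\im V=\Ker F$) is in general \emph{not} a full flag: its graded pieces can have dimension greater than one, and a refinement to a full flag is a non-canonical choice. The word $w(N)$ can still be defined (e.g.\ from the dimensions of the subspaces $\Phi(N)$ for all words $\Phi$ in $F,V^{-1}$, which are isomorphism invariants), but your phrase ``canonical full flag \dots functorial under isomorphism'' is exactly the issue you flag as ``the main obstacle'' and never resolve. Second, and more seriously, the assertion that on a basis $v_1,\dots,v_h$ adapted to the flag the maps $F$ and $V$ act monomially, $v_i\mapsto c_i v_{\pi(i)}$ or $v_i\mapsto 0$, is false for an arbitrary adapted basis: a priori $Fv_i$ is only a linear combination of $v_1,\dots,v_j$ for the appropriate $j$. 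Producing a basis on which $F$ and $V^{-1}$ are monomial (a ``final'' basis in Oort's terminology) is the heart of the classification and requires a genuine inductive argument (Kraft's separation lemmas, Oort's construction of final filtrations and bases in \cite{oort1}, or the group-theoretic argument of \cite{moonen-wedhorn}); once it is available, your last step is correct and easy, since going around each cycle of the $(F,V^{-1})$-diagram the rescaling condition becomes $\lambda^{p^a-1}=c$, solvable in $k^\times$ because $k$ is algebraically closed. (A minor further slip: the basis indices are not partitioned into separate ``$F$-cycles'' and ``$V$-cycles''; the cycles of the $(F,V^{-1})$-diagram mix $F$-arrows and $V^{-1}$-arrows, as in \eqref{FVDiagma}.) So your proposal reduces the theorem to its hardest ingredient rather than proving it; to make it complete you would either have to supply the final-basis argument or, as the paper does, invoke \cite{kraft}, \cite{oort1} or \cite{moonen-wedhorn}.
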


Before we recall the bijection in this theorem,
we give a remark.
\begin{remark}
Giving a ${\rm DM_1}$ over $k$
is equivalent to giving an $F$-zip over $k$
with support contained in $\{0,1\}$
with the terminology in \cite{moonen-wedhorn},
where Moonen-Wedhorn used some subsets of the Weyl group of $GL_h$ (in this case) as classifying data of them over $k$.
Using the Weyl group as classifying data is quite natural, but
we shall not use the structure of the Weyl group in this paper.
We here use $\{0,1\}^h$, as classifying data of ${\rm DM_1}$'s,
which has an advantage
when we treat decompositions of ${\rm DM_1}$'s into direct summands
often considered in this paper.
\end{remark}

We identify $\{0,1\}^h$ with the set of maps from $\{1,\ldots, h\}$ to $\{0,1\}$, i.e., with the set of sequences of $0$ and $1$ with length $h$.
Let $A$ be an element of $\{0,1\}^h$, 
and let $\delta:\{1,\ldots, h\}\ni i \mapsto \delta_{i}\in \{0,1\}$
be the map corresponding to $A$. Then we express $A$ as 
the sequence $\delta_{1}\delta_{2}\cdots \delta_{h}$.
The bijection in the theorem is defined by the following. 
To a sequence $A$, we associate a ${\rm DM_1}$ $N=ke_1\oplus \cdots \oplus ke_h$
with $F$ and $V$ defined as follows. 
We define maps $F$, $V$ using $\delta$ as follows. 
\begin{equation}\label{DefOfF}
  Fe_i= \begin{cases}
    e_j\ (j=\# \{t \mid 1\leq t \leq i ;\ \delta_t=0\}) & $if$\ \delta_i=0, \\
    0 & $if$\ \delta_i=1.
  \end{cases}
\end{equation}
Let $u$ (resp. $v$) be the number of 1 (resp. 0) in sequence $A$. 
Let $e_{i_1},\ldots,e_{i_u}\ (i_1<\cdots <i_u)$ be the set of $e_i$ with $\delta_i=1$.
\begin{equation}\label{DefOfV}
  Ve_j= \begin{cases}
    e_{i_k} & $for$\ j>v,\ k=j-v, \\
    0 & $for$\ j\leq v.
  \end{cases}
\end{equation}
One can check that the obtained triple $(N, F, V)$ is a ${\rm DM_1}$. 

To express the Dieudonn\'e module associated with
$A=\delta_1\delta_2\cdots\delta_h$,
the diagram with arrows
\begin{equation}\label{FVDiagma}
\xymatrix{\delta_j &\delta_i, \ar@/_18pt/[l]_F &\delta_{i_k} \ar@/_18pt/[r]^{V^{-1}} &\delta_j}
\end{equation}
is useful, where we used the notation in \eqref{DefOfF} and \eqref{DefOfV}.
	This diagram is called the {\it $(F,V^{-1})$-diagram} of $A$
(or of the ${\rm DM_1}$ associated with $A$).
As an example, let us look at the ${\rm DM_1}$ associated with sequence 10100. 
Let $N=\langle e_1, e_2, e_3, e_4, e_5\rangle$. 
The $(F,V^{-1})$-diagram is
\\
$$\xymatrix{1\ar@/_20pt/[rrr]^{V^{-1}} &0\ar@/_18pt/[l]_F &1\ar@/_20pt/[rr]^{V^{-1}} &0\ar@/_18pt/[ll]_F &0\ar@/_18pt/[ll]_F \\ e_1 & e_2 & e_3 & e_4 & e_5}$$
This means
$$\begin{cases}
  Fe_2=e_1, & Fe_1=0,\\
  Fe_4=e_2, & Fe_3=0,\\
  Fe_5=e_3,
\end{cases}\ \ \ \ \ \ \ \ \ \ \ \ \ 
\begin{cases}
  Ve_1=0, & Ve_4=e_1,\\
  Ve_2=0, & Ve_5=e_3,\\
  Ve_3=0.
\end{cases}$$
It follows from $\mathop{\rm Ker}F=\mathop{\rm Im}V=\{e_1, e_3\}$ 
and $\mathop{\rm Im}F=\mathop{\rm Ker}V=\{e_1, e_2, e_3\}$ that $(N, F, V)$ is a ${\rm DM_1}$.

\begin{definition}\label{DefNewtonPolygon}
A {\it Newton polygon} $\xi$ is a finite multiple set of pairs $(m_i,n_i)$
($i=1,\ldots,t$) of coprime non-negative integers. Conventionally we write
$\xi = \sum_{i=1}^t (m_i,n_i)$.
We call $\sum_{i=1}^t n_i$ the {\it dimension} of $\xi$
and $\sum_{i=1}^t (m_i+n_i)$ the {\it height} of $\xi$.
Put $\lambda_i=n_i/(m_i+n_i)$.
We arrange $(m_i,n_i)$ so that
$\lambda_i\ge \lambda_j$ for $i<j$.
We regard $\xi$ as the downward-convex line graph starting at $(0,0)$ and ending at $(h,d)$
with breaking points $\sum_{i > j}(m_i+n_i,n_i)$ for $j=0,\ldots,t$.
For two Newton polygons $\xi$ and $\zeta$ with same ending point,
we say $\zeta \prec \xi$ if any point of $\zeta$ is above or on $\xi$.
We say that $\zeta \prec \xi$ is {\it saturated}
if there is no Newton polygon $\eta$ such that
$\zeta \precneqq \eta \precneqq \xi$.
\end{definition}

To each Newton polygon, we assoiate a ${\rm DM_1}$,
which is called minimal:

\begin{definition}
Let $m$ and $n$ be non-negative integers with gcd($m,n$)=1. 
To $(m,n)$, we associate a ${\rm DM_1}$ which corresponds to the sequence 
$$A_{m,n} = \underbrace{1\ 1\cdots1}_{m}\underbrace{0\cdots0}_{n}$$ 
by Theorem~\ref{theorem2}, 
and we write it as $N_{m,n}$. Such ${\rm DM_1}$'s are called {\it simple}
(more precisely should be called {\it ${\rm DM_1}$-simple}). 
A {\it minimal} ${\rm DM_1}$ is
the direct sum
\[
N_{\xi} := \bigoplus N_{m_i,n_i}
\] 
for some Newton polygon $\xi = \sum (m_i,n_i)$.
We may frequently identify a ${\rm DM_1}$ $N$ and $A\in \{0,1\}^h$ 
if $N$ is the ${\rm DM_1}$ corresponding to $A$, and 
we write $A \in \{0,1\}^h$ as $A_{\xi}$ 
if $A$ corresponds to $N_{\xi}$.
\end{definition}

This is the notion of Dieudonn\'e modules of
$p$-kernels of minimal $p$-divisible groups.
Let us recall the definition of minimal $p$-divisible groups.
For each coprime pair $(m,n)$ of non-negative integers,
let $H_{m,n}$ be the $p$-divisible group $H_{m,n}$ over $\F_p$ whose
Dieudonn\'e module ${\mathbb D}(H_{m,n})$ is given by
\[
{\mathbb D}(H_{m,n}) = \bigoplus_{i=1}^{m+n} {\mathbb Z}_p e_i
\]
with $F,V$-operations defined by
$Fe_i=e_{i-m}$ and $Ve_i=e_{i-n}$, where
$e_i$ for non-positive $i$ is inductively defined by $e_i=p e_{i+m+n}$.
We set
\[
H(\xi) = \bigoplus H_{m_i,n_i}
\]
for each Newton polygon $\xi = \sum (m_i,n_i)$.
A {\it minimal} $p$-divisible group is a $p$-divisible group
which is isomorphic over an algebraically closed field 
to $H(\xi)$ for some Newton polygon $\xi$.
Let $k$ be an algebraically closed field.
The Dieudonn\'e module of the $p$-kernel of $H(\xi)_k$
is isomorphic to $N_\xi$.
The main theorem of \cite{oort2} says that
for any $p$-divsible group $X$ over $k$,
if ${\mathbb D}(X[p])\simeq N_\xi$, then $X$ is isomorphic to $H(\xi)_k$.


Let $N_1$ and $N_2$ be two ${\rm DM_1}$'s.
Let $A$ and $B$ be the elements of $\{0,1\}^{h_1}$ and $\{0,1\}^{h_2}$ 
corresponding to $N_1$ and $N_2$ respectively.
Recall how to get the element of $\{0,1\}^{h_1+h_2}$ corresponding to 
the direct sum $N_1\oplus N_2$.
For this, we define a real number $b_{A}(i)$
with $0\le b_{A}(i) \le 1$ for each $i=1,\ldots, h_1$.
To define $b_A(i)$, 
we consider the $(F, V^{-1})$-diagram of $A$.
Running though the arrows in the reverse direction from $\delta^A_i$
we set $b_l=0$ if the $l$-th arrow is $F$
and $b_l=1$ if the $l$-th arrow is $V^{-1}$.
We define $b_A(i)$ to be the binary expansion
\[
b_A(i) = 0.b_1b_2\cdots.
\]
In the similar way, we define $b_{B}(j)$ for $j=1,\ldots, h_2$.
The sequence corresponding to $N_1\oplus N_2$, written as $A\oplus B$,
is obtained by arranging $\delta^A_i$ ($i=1,\ldots,h_1$) and
$\delta^B_j$ ($j=1,\ldots,h_2$) 
in ascending order of their binary expansions, namely
$A\oplus B = \cdots\delta_i^A\cdots\delta_j^B\cdots$ if $b_A(i) < b_B(j)$ and
$\cdots\delta_j^B\cdots\delta_i^A\cdots$ if $b_B(j) < b_A(i)$, and so on.

\begin{example}\label{Example_N35N32}

Here, let us see $N_{3,5}\oplus N_{3,2}$
as an example of direct sum of ${\rm DM_1}$'s.
We write $N_{3,5}$ and $N_{3,2}$ in the following using sequences. \\ 
$$N_{3,5} : \xymatrix@=8pt{1_1\ar@/_20pt/[rrrrr] &1_2\ar@/_20pt/[rrrrr] &1_3\ar@/_20pt/[rrrrr]&0_4\ar@/_20pt/[lll] &0_5\ar@/_20pt/[lll] &0_6\ar@/_20pt/[lll] &0_7\ar@/_20pt/[lll] &0_8\ar@/_20pt/[lll]},\ \ \ \ 
N_{3,2} : \xymatrix@=8pt{ 1_1\ar@/_20pt/[rr] & 1_2\ar@/_20pt/[rr] & 1_3\ar@/_20pt/[rr] &
0_4\ar@/_20pt/[lll] & 0_5\ar@/_20pt/[lll]}$$ 
\\
Let $A\in\{0,1\}^8$ and $B\in\{0,1\}^5$ be the sequences
of $N_{3,5}$ and $N_{3,2}$ respectively. 

Consider $b_A(8)$ for example: we trace vectors
in the reverse direction from $0_8$:
\[
0_8 \xleftarrow[V^{-1}]{} 1_3 \xleftarrow[\ F\ ]{} 0_6 \xleftarrow[V^{-1}]{} 1_1 
\xleftarrow[\ F\ ]{} 0_4 \xleftarrow[\ F\ ]{} 0_7 \xleftarrow[V^{-1}]{} 1_2 
\xleftarrow[\ F\ ]{}0_5 \xleftarrow[\ F\ ]{} 0_8 \xleftarrow[\ V^{-1}\ ]{} \cdots.
\] 
Hence we get $b_{A}(8)=0.10100100\cdots$. Similarly we have
\begin{eqnarray*}
b_A(8)=0.10100100\cdots, && b_B(1)=0.01011\cdots,\\
b_A(7)=0.10010100\cdots, && b_B(2)=0.01101\cdots,\\
b_A(6)=0.10010010\cdots, && b_B(3)=0.10101\cdots,\\
b_A(5)=0.01010010\cdots, && b_B(4)=0.10110\cdots.
\end{eqnarray*}
By the above, we in particular get:
$$b_A(5)<b_B(1)<b_B(2)<b_A(6)<b_A(7)<b_A(8)<b_B(3)<b_B(4).$$
Then the sequence $A_\xi$ corresponding to
$N_{\xi}=N_{3,5}\oplus N_{3,2}$ is
$$A_{\xi}=1^A_1\ \ 1^A_2\ \ 1^A_3\ \ 0^A_4\ \ 
0^A_5\ \ 1^B_1\ \ 1^B_2\ \ 0^A_6\ \ 0^A_7\ \ 
0^A_8\ \ 1^B_3\ \ 0^B_4\ \ 0^B_5,$$ 
where, to avoid confusion, we write each elements $1_i$ (resp. $0_i$) of 
$A$ as $1^A_i$ (resp. $0^A_i$),
and we write each elements $1_j$ (resp. $0_j$) of 
$B$ as $1^B_j$ (resp. $0^B_j$).

\end{example}

More generally we have
\begin{lemma}\label{lem1}
Let $\xi=(m_1,n_1)+(m_2,n_2)$
with $\lambda_2 < 1/2 < \lambda_1$ 
where $\lambda_i = n_i/(m_i+n_i)$.
The sequence associated with $N_\xi=N_{m_1,n_1}\oplus N_{m_2,n_2}$ is given by
$$\underbrace{1^A_1\cdots 1^A_{m_1}}_{m_1}
\underbrace{0^A_{m_1+1}\cdots 0^A_{n_1}}_{n_1-m_1}
\underbrace{1^B_1\cdots 1^B_{n_2}}_{n_2}\ 
\underbrace{0^A_{n_1+1}\cdots 0^A_{m_1+n_1}}_{m_1}
\underbrace{1^B_{n_2+1}\cdots 1^B_{m_2}}_{m_2-n_2}\ 
\underbrace{0^B_{m_2+1}\cdots 0^B_{m_2+n_2}}_{n_2},$$
where $A$ and $B$ are the sequences associated with $N_{m_1,n_1}$ and $N_{m_2,n_2}$ respectively.
\end{lemma}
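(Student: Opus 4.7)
Unpacking the definition of direct sum, the claim asserts a specific sorted order of the real numbers $b_A(i)$ for $A=A_{m_1,n_1}$, $i=1,\ldots,h_1:=m_1+n_1$, and $b_B(j)$ for $B=A_{m_2,n_2}$, $j=1,\ldots,h_2:=m_2+n_2$. From the $(F,V^{-1})$-diagram of the simple building block $N_{m,n}$, the incoming arrow at $e_k$ is $F$ from $e_{m+k}$ when $k\le n$ and $V^{-1}$ from $e_{k-n}$ when $k>n$, so the reverse-trace map is $\phi(k)=k-n$ for $k>n$ and $\phi(k)=k+m$ for $k\le n$ (equivalently $k\mapsto k-n\bmod h$ with representatives in $\{1,\ldots,h\}$), and the $l$-th binary digit of $b(i)$ equals $1$ precisely when $\phi^{l-1}(i)>n$. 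I would split the proof into a monotonicity step and a boundary-comparison step.

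\textbf{Monotonicity within each simple factor.} The first task is to prove $b_A(1)<b_A(2)<\cdots<b_A(h_1)$ (and analogously for $b_B$). Since $\gcd(m,n)=1$ the $h$ values $b_{A_{m,n}}(i)$ are the $h$ distinct cyclic shifts of a single length-$h$ binary word, and the natural index order $i=1,2,\ldots,h$ visits these shifts in lexicographically increasing order. This is a classical property of Christoffel-type words; a self-contained proof inducts on $h$ using the Euclidean reduction $(m,n)\mapsto(m,n-m)$ when $n>m$ (and its symmetric version when $m>n$).

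\textbf{Boundary inequalities between the two factors.} Granted monotonicity, the whole ordering reduces to three strict inequalities at the block interfaces:
\[
b_A(n_1)<b_B(1),\qquad b_B(n_2)<b_A(n_1+1),\qquad b_A(h_1)<b_B(n_2+1).
\]
The middle one is immediate since the first digit of $b_B(n_2)$ is $0$ while that of $b_A(n_1+1)$ is $1$. For the other two I would trace $\phi$-orbits and compare initial binary digits. For instance, the orbit of $n_1$ under $\phi_A$ begins $n_1\to n_1+m_1\to m_1\to 2m_1\to\cdots$, giving digits $0,1,0,\mathbf{1}[2m_1>n_1],\ldots$, while the orbit of $1$ under $\phi_B$ begins $1\to 1+m_2\to 1+m_2-n_2\to\cdots$, giving $0,1,\mathbf{1}[m_2\ge 2n_2],\ldots$; using $n_1>m_1$ and $m_2>n_2$, one verifies that either the $A$-expansion produces a $0$ at a position where the $B$-expansion produces a $1$, or several leading digits agree and the problem reduces to a comparison with strictly smaller $h_1+h_2$, closing by induction. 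The inequality $b_A(h_1)<b_B(n_2+1)$ is handled symmetrically, via the orbits $h_1\to m_1\to 2m_1\to\cdots$ and $n_2+1\to 1\to 1+m_2\to\cdots$ with leading digits $1,0,\mathbf{1}[2m_1>n_1],\ldots$ and $1,0,1,\ldots$ respectively.

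\textbf{Main obstacle.} The nontrivial work is the recursive case analysis in the borderline regime where both $2m_1>n_1$ and $2n_2>m_2$: here several leading binary digits of the competing $b$-values coincide, and resolving the inequality requires iterating the digit computation and identifying at each step a strictly smaller instance of the same comparison. This Euclidean-descent mechanism mirrors the induction used to prove monotonicity, so both halves of the proof rest on a common combinatorial core.
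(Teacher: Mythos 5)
Your reduction is the right one and most of your digit computations check out: granting that $b_A(i)$ is increasing in $i$ and $b_B(j)$ in $j$, the displayed interleaving is equivalent to the three interface inequalities $b_A(n_1)<b_B(1)$, $b_B(n_2)<b_A(n_1+1)$, $b_A(h_1)<b_B(n_2+1)$, and the middle one is indeed immediate. (Note the paper itself gives no internal proof; it only cites \cite{harashita1}, Proposition 4.20, so any complete argument here has to carry the combinatorial weight.) The genuine gap is exactly where you place your ``main obstacle'': in the regime $2m_1>n_1$ and $2n_2>m_2$ you say that after several coinciding digits ``the problem reduces to a comparison with strictly smaller $h_1+h_2$, closing by induction'', but you never exhibit that smaller instance. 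After the common prefix, the two tails are codings of rotations by $m_1/h_1$ and $m_2/h_2$ started at points which are no longer the distinguished indices $n_1$ and $1$ (resp.\ $h_1$ and $n_2+1$) of any smaller pair $(m_1',n_1'),(m_2',n_2')$; to recognize the tail comparison as ``the same statement for smaller parameters'' you would need the substitution/Euclidean rules describing how the digit words of $A_{m,n}$ transform under $(m,n)\mapsto(m,n-m)$, and that is precisely the content being assumed. As written, the descent is neither well-defined nor shown to be well-founded, so the two outer inequalities are unproven in the crucial case. The monotonicity step, by contrast, is true and easy to complete: if $i<i'$ lie on the same side of the threshold, $\phi$ translates both by the same amount, so their difference is preserved until the first index $l$ with $\phi^l(i)\le n<\phi^l(i')$, which must occur and forces $b(i)<b(i')$.

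A concrete way to close the gap without any descent: since $n_1>m_1$, a digit $1$ in any $b_A$-expansion (position $>n_1$) is always followed by a digit $0$ (the next position lies in $\{1,\dots,m_1\}$), so no $b_A(i)$ contains ``$11$'', and it cannot be exactly alternating because $m_1\neq n_1$; hence $b_A(i)<1/3$ when its first digit is $0$ and $b_A(i)<2/3$ when it is $1$. Dually, $m_2>n_2$ forces every $b_B(j)$-expansion to avoid ``$00$'', giving $b_B(j)>1/3$ when its first digit is $0$ and $b_B(j)>2/3$ when it is $1$. This yields $b_A(n_1)<1/3<b_B(1)$ and $b_A(h_1)<2/3<b_B(n_2+1)$ uniformly, including your borderline regime, and together with monotonicity it finishes the lemma.
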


\begin{proof}
See \cite{harashita1}, Proposition 4.20.
\end{proof}


\section{Specializations}
We introduce the notion of families of ${\rm DM_1}$'s
(but such a family will be also called a ${\rm DM_1}$ simply),
and review some basic facts on specializations of ${\rm DM_1}$'s.

Let $R$ be a commutative ring of characteristic $p>0$.
Let $\sigma:R \rightarrow R$ be the frobenius endomorphism 
defined by $\sigma(a)=a^p$.

\begin{definition}\label{DefFamilyDM1}

A ${\rm DM_1}$ over $R$ of height $h$ is a quintuple 
${\cal N}=({\cal N}, C, D, F, V^{-1})$ 
where
\begin{enumerate}
\item[(1)] ${\cal N}$ is a free $R$-module of rank $h$, 
\item[(2)]$C$ and $D$ are submodules of ${\cal N}$
which are locally direct summands of ${\cal N}$,
\item[(3)]
$F:({\cal N}/C) \otimes_{R,\sigma} R\rightarrow D$ and 
$V^{-1}: C\otimes_{R,\sigma} R\rightarrow {\cal N}/D$
are $R$-linear isomorphisms. 
\end{enumerate}

\end{definition}

\begin{remark}
\begin{enumerate}
\item[(1)] In Moonen and Wedhorn \cite{moonen-wedhorn},
${\rm DM_1}$'s over $R$ here are called
$F$-zips over $R$ with support contained in $\{0,1\}$.
\item[(2)] When $R$ is a perfect field $k$,
to a ${\rm DM_1}$ $(N,V,F)$ over $k$ with the notation in Definition \ref{DefDM1}
we associate a quintuple $(N,VN,FN,F,V^{-1})$, which naturally becomes
a ${\rm DM_1}$ with the notation in Definition \ref{DefFamilyDM1}.
By this association, we identify them.
\end{enumerate}
\end{remark}

Let $k$ be an algebraically closed field of characteristic $p$,
and let $R=k[\![t]\!]$ be the ring of formal power series over $k$. 
For $\mathcal N$ an arbitrary ${\rm DM_1}$ over $R$,
we can consider ${\mathcal N}_k := {\mathcal N}\otimes_R k$, which is a ${\rm DM_1}$ over $k$.
Hence we have the canonical map called {\it specialization}
\begin{center}
$\{{\rm DM_1} \ $over$ \ R\}\longrightarrow \{{\rm DM_1} \ $over$ \ k\}$.
\end{center}
sending ${\cal N}$ to ${\cal N}_k$.

Let $K$ be the fractional field of $R$. 
We also consider ${\cal N}_{\overline K} := {\cal N}\otimes_R {\overline K}$,
which is a ${\rm DM_1}$ over $\overline K$ and is called the geometric generic fiber of $\mathcal N$.

\begin{definition}
Let $A$ and $B$ be elements of $\{0,1\}^h$.
We say $B \preceq A$ if there exists an ${\rm DM_1}$ over $R$ 
such that ${\cal N}_k$ is associated with $B$ and ${\cal N}_{\overline K}$ is associated with $A$.
\end{definition}

Let $A, B$ be the sequences of $N_1, N_2$ respectively. 
We denote  by $A\oplus B$ the sequence corresponding to $N_1\oplus N_2$.
It is obvious that if $B \prec A$, then $B\oplus P \prec A\oplus P$ holds
for $A,\ B \in \{0, 1\}^h$ and $P \in \{0, 1\}^{h'}$.

\begin{definition}
Let $A$ and $B$ be elements of $\{0,1\}^h$.
We say $B < A$ if 
there exist elements $A^{(1)}, \ldots, A^{(\ell-1)}$ of $\{0,1\}^h$
with $A^{(0)}:=B$ and $A^{(\ell)}:=A$ such that
one can write $A^{(i)}=P^{(i)} \oplus Q^{(i)}$, and 
$A^{(i-1)}=P^{(i)} \oplus Q^{(i-1)}$ for $i=1, \ldots, \ell$
for some $P^{(i)}$, where $Q^{(i-1)}$ is constructed by exchanging
an adjacent subsequence ``0 1" in $Q^{(i)}$ for ``1 0". 

\end{definition}

The following is known,
see the proof of \cite[Proposition~11.1]{oort1}.

\begin{proposition}

If $B \leq A$, then $B \preceq A$ holds.

\end{proposition}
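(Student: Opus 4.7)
My plan is to construct the required family over $R = k[\![t]\!]$ explicitly in the base case of a single adjacent swap and then to combine such families for longer chains.

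The reduction runs as follows. The hypothesis $B \le A$ provides a chain $B = A^{(0)}, A^{(1)}, \ldots, A^{(\ell)} = A$ of single-swap steps. Combining the direct-sum remark preceding the proposition with the transitivity of $\preceq$ (a standard concatenation of formal families, e.g.\ by gluing over $\Spf k[\![s,t]\!]$ with $\mathcal{L}|_{t = 0}$ the family realising $A^{(i)} \preceq A^{(i-1)}$ and $\mathcal{L}|_{s = 0}$ the one realising $A^{(i-1)} \preceq A^{(i-2)}$, then pulling back along the diagonal $s = t$), it suffices to treat the base case $\ell = 1$ and, within the direct summand, to exhibit a family $\tilde Q$ over $R$ with $\tilde Q_k$ isomorphic to $Q^{(0)} = \cdots 1\, 0 \cdots$ (having ``10'' at positions $j, j+1$) and $\tilde Q_{\bar K}$ isomorphic to $Q^{(1)} = \cdots 0\, 1 \cdots$.

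To construct $\tilde Q$, let $T = \{i : \delta^{Q^{(0)}}_i = 1\}$ (so $j \in T$, $j+1 \notin T$), set $v = h - |T|$ (the common number of $0$'s), and take $\tilde Q = R\epsilon_1 \oplus \cdots \oplus R\epsilon_h$ with
\[
D = R\epsilon_1 \oplus \cdots \oplus R\epsilon_v, \qquad
C = \Bigl( \bigoplus_{i \in T \setminus \{j\}} R\epsilon_i \Bigr) \oplus R(\epsilon_j + t\epsilon_{j+1}).
\]
Since the transformation $(\epsilon_j, \epsilon_{j+1}) \mapsto (\epsilon_{j+1}, \epsilon_j + t\epsilon_{j+1})$ has unit determinant, $C$ and $D$ are locally direct summands of the expected ranks. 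I would then define the $R$-linear isomorphisms $F: \tilde Q/C \otimes_{R,\sigma} R \to D$ and $V^{-1}: C \otimes_{R,\sigma} R \to \tilde Q / D$ on the obvious bases so that at $t = 0$ they reproduce the standard $(F, V^{-1})$-structure of the DM$_1$ associated with $Q^{(0)}$.

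Finally, for verification: at $t = 0$, $\tilde Q_k$ tautologically reproduces $Q^{(0)}$. At the geometric generic fibre, I would pass to the new basis $\tilde\epsilon_j := \epsilon_{j+1}$, $\tilde\epsilon_{j+1} := \epsilon_j + t\epsilon_{j+1}$, and $\tilde\epsilon_i := \epsilon_i$ for $i \ne j, j+1$. In this basis $C_{\bar K}$ is spanned by $\{\tilde\epsilon_i : i \in T'\}$ where $T' = (T \setminus \{j\}) \cup \{j+1\}$ indexes the $1$'s of $Q^{(1)}$, matching its standard $C$; a short computation tracking the Frobenius-semilinearity of $F$ and $V^{-1}$ then shows the remaining structure matches the standard $Q^{(1)}$, so $\tilde Q_{\bar K} \cong Q^{(1)}$. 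The main technical points are the edge case where $\{j, j+1\}$ meets the rank boundary of $D$ (requiring a compensating adjustment of $D$, e.g.\ replacing $\epsilon_v$ by $\epsilon_v + t\epsilon_{v+1}$ when $j = v$), and the careful bookkeeping of the $\sigma$-twist through the change of basis; these are the places where the argument needs care, although the overall structure is robust.
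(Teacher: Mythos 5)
The paper does not actually prove this statement: it simply cites the proof of Oort \cite[Proposition~11.1]{oort1}. Your plan of explicitly deforming the module over $R=k[\![t]\!]$ one adjacent swap at a time is in the spirit of that cited argument, but as written it has genuine gaps. The central one is that you never define $F$ and $V^{-1}$ over $R$. Saying they are chosen ``so that at $t=0$ they reproduce the standard structure'' does not determine them, and this is precisely where the content lies: the isomorphism class of a ${\rm DM_1}$ over $\bar K$ is \emph{not} determined by the positions of $C$ and $D$ alone, but by the whole canonical filtration obtained by iterating $F$ and $V^{-1}$. With a bad (but $t=0$-compatible) choice of $F,V^{-1}$ the geometric generic fiber can be a type other than $Q^{(1)}$, so the verification you defer to ``a short computation'' is exactly the step that must be carried out, including the $t^p$'s introduced by the $\sigma$-twist under your change of basis.

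Second, your proposed fix for the boundary case $j=v$ is wrong. Take the smallest instance $Q^{(0)}=10$, $Q^{(1)}=01$, so $h=2$, $T=\{1\}$, $v=1=j$. Replacing $\epsilon_v$ by $\epsilon_v+t\epsilon_{v+1}$ in $D$ yields $C=D=R(\epsilon_1+t\epsilon_2)$, so the generic fiber still satisfies $C_{\bar K}=D_{\bar K}$ and is again of type $10$: the family is constant and does not realise $10\preceq 01$. The unadjusted choice $D=R\epsilon_1$, $C=R(\epsilon_1+t\epsilon_2)$ is the correct one there; the failure of your particular basis change to preserve $D$ only means you need a different isomorphism at the generic fibre, not a different family. (One can check that without any adjustment the ranks $\dim(C\cap D)$ at the special and generic fibres already match those of $Q^{(0)}$ and $Q^{(1)}$ respectively.) Finally, the transitivity of $\preceq$ via a two-parameter family over $k[\![s,t]\!]$ restricted to the diagonal is asserted but not constructed: the two one-parameter families do not by themselves produce an $\mathcal L$ with the prescribed restrictions to both axes, so this reduction also needs an argument (e.g.\ via constructibility of Ekedahl--Oort strata in a deformation space, which is essentially how the cited source handles it).
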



We want to know relations between ${\cal N}\otimes_R {\overline K} \ $and$ \ {\cal N}\otimes_R k$. 
The aim of this paper is to show the existence of ${\cal N}$ satisfying 
${\cal N}\otimes_R \overline{K}\cong N_{m_1,n_1}\oplus N_{m_2,n_2}=N_{\xi}$ 
and \ ${\cal N}\otimes_R k \cong N_{m'_1,n'_1}\oplus N_{m'_2,n'_2}\oplus 
\cdots \oplus N_{m'_i,n'_i}=N_{\zeta}$ for any $\zeta \prec \xi$. 
Before dealing with the general case, 
let us see as an example the case of $\zeta=(2,3)+4(1,1)\prec \xi=(3,5)+(3,2)$. 
\begin{example}\label{example1}

We consider $N_{\xi}=N_{3,5}\oplus N_{3,2}$.
As seen in Example~\ref{Example_N35N32}, the sequence corresponding to $N_{\xi}$ is
$$A_{\xi}=1\ 1\ 1\ 0\ \underline{0\ 1}\ 1\ 0\ 0\ 0\ 1\ 0\ 0.$$
Let $A^-_{\xi}$ be the sequence obtained by 
exchanging 0 and 1 of the above underline part.
Then another subsequence $0\ 1$ appears, see the double-underline part below: 
$$A^-_{\xi}=1\ 1\ 1\ \underline{\underline{0\ 1}}\ 0\ 1\ 0\ 0\ 0\ 1\ 0\ 0.$$
Let $A^{--}_{\xi}$ be the sequence obtained by exchanging 0 and 1 of the double-underline part:
\[
A^{--}_{\xi}=1\ 1\ 1\ 1\ 0\ 0\ 1\ 0\ 0\ 0\ 1\ 0\ 0.
\]
The $(F,V^{-1})$-diagram of $N^{--}_{\xi}$ associated with $A^{--}_{\xi}$ is
\\

$$N^{--}_{\xi} : \xymatrix@=10pt{1\ar@/_20pt/[rrrr] &1\ar@/_20pt/[rrrr] &0\ar@/_20pt/[ll] &1\ar@/_20pt/[rrr] &0\ar@/_20pt/[lll] &1\ar@/_20pt/[rr] &0\ar@/_20pt/[llll] &0\ar@/_20pt/[llll]}\ \oplus \ 
\xymatrix@=10pt{1\ar@/_20pt/[rrr] &1\ar@/_20pt/[rrr] &0\ar@/_20pt/[ll] &0\ar@/_20pt/[ll] &0\ar@/_20pt/[ll]}.$$
\\
The right direct summand $1\ 1\ 0\ 0\ 0$ is the sequence of $N_{2,3}$,
and the left direct summand $A' := 1\ 1\ 0\ 1\ 0\ 1\ 0\ 0$ can be specialized to
$B:=1\ 1\ 1\ 1\ 0\ 0\ 0\ 0$ by exchanging repeatedly 0 1 for 1 0.
Note that $B$ is the sequence of $N_{1,1}^{\oplus 4}$.
Hence we get a specialization from $N_\xi$ to
$N_{\zeta}=N_{2,3}\oplus N_{1,1}\oplus N_{1,1}\oplus N_{1,1}\oplus N_{1,1}$.
The figure of  $\xi$ and $\zeta$ is as follows.

\includegraphics[width=180mm]{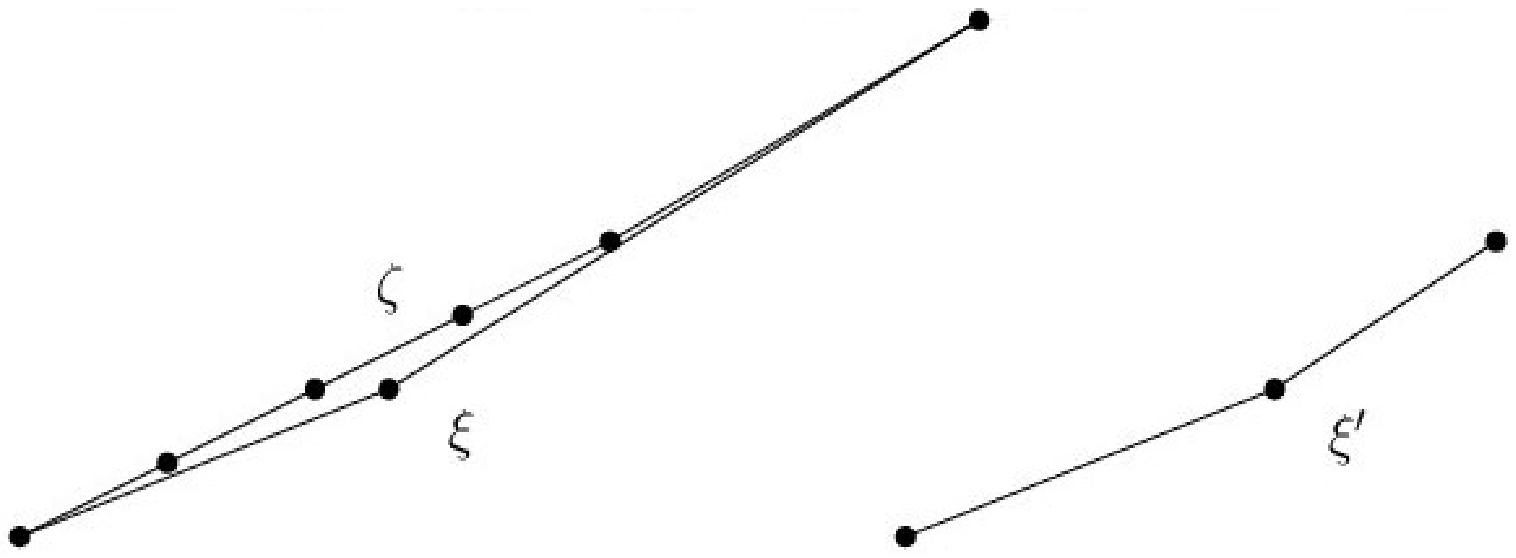}

\noindent Put $\xi' = (1,2) + (3,2)$. Then
\[
A_{\xi'} = 1\ 0\ 0 \oplus 1\ 1\ 1\ 0\ 0 = 1\ \underline{0\ 1}\ 1\ 0\ 1\ 0\ 0.
\]
We define $A_{\xi'}^-$
in the similar way as when we defined $A_{\xi}^-$ from $A_{\xi}$, i.e.,
\[
A_{\xi'}^- = 1\ 1\ 0\ 1\ 0\ 1\ 0\ 0.
\]
We observe 
that $A_{\xi'}^-$ is equal to $A'$ above.
The key step of our proof is to show that such phenomenon always occurs.
As the height of $\xi'$ is less than that of $\xi$,
this allows us to prove the main theorem by induction.
\end{example}

\section{Combinatorics of Newton polygons}
We show some combinatorial facts on Newton polygons,
which will be used later on.

For Newton polygons $\zeta\prec\xi$ of height $h$ we set
\begin{equation}
\displaystyle c(\zeta,\xi)=2\sum^{h}_{i=1}(\zeta(i)-\xi(i)),
\end{equation}
where we regard Newton polygons
(line graphs in the $xy$-plane)
as functions on $\{x\in\mathbb{R}\mid 0\le x\le h\}$.
There is a relationship between $c(\zeta,\xi)$ and
the number of segments of $\zeta$ in a special case:

\begin{proposition}\label{prop4}
Let $\xi$ be a Newton polygon consisting of two segments.
Let $\zeta\prec \xi$ be a saturated pair of Newton polygons.
Then $c(\zeta,\xi)$ is equal to the number of segments of $\zeta$.

\end{proposition}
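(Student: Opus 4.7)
My plan is to reinterpret both $c(\zeta,\xi)$ and the number of segments of $\zeta$ as lattice invariants of the region between $\zeta$ and $\xi$ and then apply Pick's theorem. Since Newton polygons are piecewise linear with vertices at lattice points, both $\zeta$ and $\xi$ have all their breakpoints at integer $x$-coordinates, so $\zeta-\xi$ is linear on each unit interval $[i,i+1]$; the trapezoidal rule is thus exact, and together with the shared endpoint values $\zeta(0)=\xi(0)=0$ and $\zeta(h)=\xi(h)=d$ this yields
\[
c(\zeta,\xi)=2\sum_{i=1}^h(\zeta(i)-\xi(i))=2\int_0^h(\zeta-\xi)\,dx=2\,\mathrm{Area}(\Omega),
\]
where $\Omega$ is the closed region bounded above by $\zeta$ and below by $\xi$.

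Next I would describe the saturated cover explicitly. Let
\[
S_0=\{(0,0),(h,d)\}\cup\{(x,y)\in\mathbb{Z}^2:0<x<h,\ y>\xi(x)\},
\]
and let $\zeta_*$ be its lower convex hull. Because $\xi$ is downward-convex, $S_0$ is contained in the epigraph of $\xi$, so $\zeta_*(x)\geq\xi(x)$ everywhere. For any Newton polygon $\eta\neq\xi$ with $\eta\geq\xi$ pointwise, every vertex of $\eta$ is a lattice point on or above $\xi$; the only interior lattice point of $\xi$ is its break $(m_2+n_2,n_2)$, and the assumption $\lambda_2<\lambda_1$ together with convexity forces $\eta$ to avoid this break (otherwise $\eta|_{[0,m_2+n_2]}$ is convex above the chord $\xi|_{[0,m_2+n_2]}$ with matching endpoints, hence equal to the chord, and similarly on the right, so $\eta=\xi$). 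Thus every interior vertex of $\eta$ lies in $S_0$, and $\eta$'s convex polygonal graph therefore lies in the convex hull of $S_0$ and dominates its lower boundary $\zeta_*$ pointwise. This shows $\zeta_*$ is the unique saturated cover, and $\zeta_*$ meets $\xi$ only at the two endpoints.

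To finish, I would apply Pick's theorem: $\mathrm{Area}(\Omega)=I+B/2-1$. Any interior lattice point of $\Omega$ would lie in $S_0$ yet strictly below $\zeta_*$, contradicting the lower-hull property, so $I=0$. For the boundary, $\xi$ has exactly $3$ lattice points since each atom $(m_i,n_i)$ has primitive direction $\gcd(m_i+n_i,n_i)=1$, and $\zeta$ has $N+1$ lattice points where $N$ is the number of its atoms (by the standard segment-by-segment lattice count, primitive or not). Subtracting the two shared endpoints gives $B=N+2$, so $\mathrm{Area}(\Omega)=N/2$ and hence $c(\zeta,\xi)=N$, the number of segments of $\zeta$.

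The main obstacle I anticipate is the characterization step, specifically ruling out that a non-trivial cover $\eta$ of $\xi$ passes through the interior break $(m_2+n_2,n_2)$; this is where the two-segment hypothesis with distinct slopes is used essentially, since only then can one invoke the ``convex above a chord with matching endpoints'' argument on each side.
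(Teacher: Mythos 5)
Your proposal is correct in substance, but it takes a genuinely different route from the paper. The paper argues by induction on the number of segments of $\zeta$: it peels off one segment $(m_1',n_1')$, passes to a smaller saturated pair $\zeta'\prec\xi'$, writes $c(\zeta,\xi)=c(\zeta',\xi')+(n_1m_1'-m_1n_1')$, and gets the determinant equal to $1$ from Lemma~\ref{SaturatednessImpliesAreaOne} (an empty lattice triangle has determinant one, i.e.\ the baby case of Pick); this inductive peeling also mirrors the induction used later for Theorem~\ref{theorem1}. You instead argue globally: you identify the saturated $\zeta$ with the lower lattice hull $\zeta_*$ of the set $S_0$ of lattice points strictly above $\xi$, deduce that the region between $\zeta$ and $\xi$ has no interior lattice points, and apply Pick's theorem once. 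This avoids induction and yields, as a byproduct, an explicit description and the uniqueness of the saturated cover of a two-segment $\xi$, which the paper never states; the trapezoid-rule identity $c(\zeta,\xi)=2\operatorname{Area}(\Omega)$ and the boundary count $B=N+2$, $I=0$ are all correct. Two assertions should be verified to make the argument airtight, though both are easy: first, to contradict saturatedness via $\zeta\precneqq\zeta_*\precneqq\xi$ you need $\zeta_*$ to be an honest Newton polygon, i.e.\ that its slopes lie in $[0,1]$; this follows because its minimal slope is $\zeta_*(1)\ge\xi(1)\ge 0$ and its maximal slope is $d-\zeta_*(h-1)\le d-\xi(h-1)=\lambda_1\le 1$ (no edge of the hull is vertical or shorter than one unit horizontally, since $S_0$ has a single point at each extreme abscissa). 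Second, the claim that $\zeta$ meets $\xi$ only at the endpoints needs one more line beyond break-avoidance: if $\zeta$ touched the relative interior of an edge of $\xi$, its first (or last) atom would coincide with the primitive vector $(m_2,n_2)$ (or $(m_1,n_1)$), forcing $\zeta$ through the break, which you have excluded; this is what guarantees that $\Omega$ is a simple lattice polygon so that Pick's theorem applies. Finally, your standing assumption $\lambda_2<\lambda_1$ is harmless, since for $\lambda_1=\lambda_2$ no Newton polygon lies strictly above $\xi$ and the statement is vacuous, an implicit assumption the paper makes as well.
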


\begin{proof}
First note that $c(\zeta,\xi)$
is equal to the area 
of the part which is surrounded by $\zeta$ and $\xi$.

We prove the proposition
by induction on the number $c$ of the segments of $\zeta$.
The case of $c=1$ is obvious from Lemma \ref{SaturatednessImpliesAreaOne} below. Assume $c\ge 2$.
Write $\xi =(m_1,n_1)+(m_2,n_2)$ and $\zeta =(m'_1,n'_1)+\cdots+(m'_c,n'_c)$.
We set $\zeta'=(m_2',n_2')+\cdots +(m_c',n_c')$ and
$\xi'=(m_1-m'_1,n_1-n'_1)+(m_2,n_2)$ if $m_1\ge m'_1$ and $n_1\ge n'_1$
or
$\zeta'=(m_1',n_1')+\cdots +(m_{c-1}',n_{c-1}')$ and
$\xi'=(m_1,n_1)+(m_2-m'_c,n_2-n'_c)$ otherwise.
We write a proof only in the former case,
as the same argument works for the latter case.
The figure in the former case is as follows.\\
\includegraphics[width=200mm]{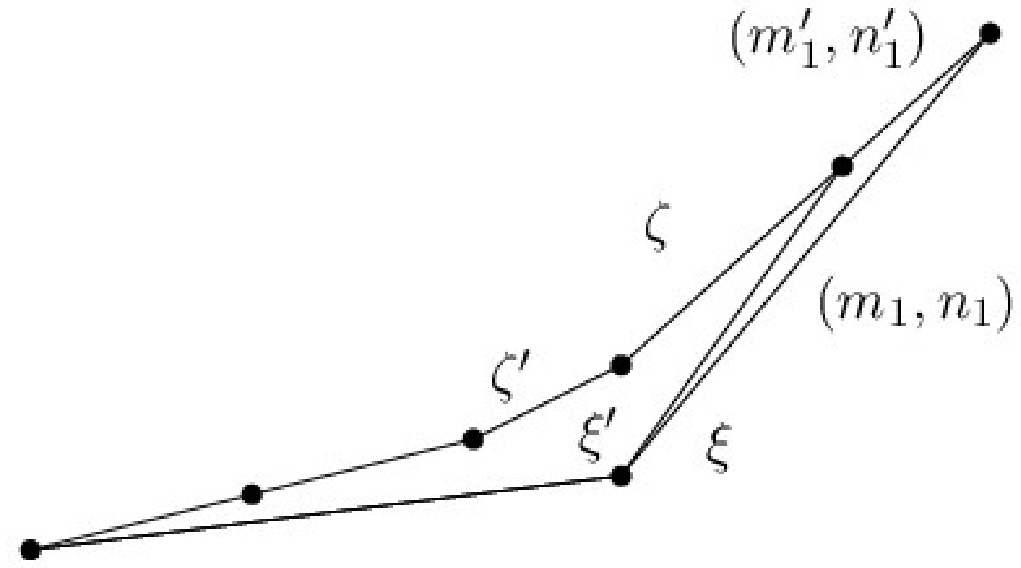}
\\
We have $\zeta'\prec \xi'$ and this is saturated. 
By the hypothesis of induction, 
the number of segments of $\zeta'$ is equal to $c(\zeta',\xi')$, 
and therefore the number of segments of $\zeta$ is equal to $c(\zeta',\xi')+1$. 
Considering the areas we have the following. 

\[
c(\zeta,\xi)
=c(\zeta',\xi') + n_1m'_1-m_1n'_1 
\]
We have to show that $n_1m'_1-m_1n'_1=1$ 
under the assumption that there is no lattice point
in the region surrounded by $(m_1',n_1')$, $\xi$ and $\xi'$.
This follows from the lemma below.
\end{proof}

For vectors $\vec{a}=(a_1,a_2)$ and $\vec{b}=(b_1,b_2)$, 
we set $\langle \vec{a},\vec{b}\rangle=a_1b_2-a_2b_1$.

\begin{lemma}\label{SaturatednessImpliesAreaOne}
Let $\vec{a}=(a_1,a_2)$, $\vec{b}=(b_1,b_2)$ with $a_1, a_2, b_1, b_2\in \mathbb{Z}$ 
and $\langle \vec{a},\vec{b} \rangle >0$. 
Assume that there exists no lattice point in the interior of 
the triangle that is the convex hull of $(0,0)$, $\vec{a}$ and $\vec{b}$.
Then $\langle \vec{a}, \vec{b}\rangle=1$.
\end{lemma}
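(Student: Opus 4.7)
The plan is to identify $\langle\vec{a},\vec{b}\rangle$ with the index of the sublattice $L:=\mathbb{Z}\vec{a}+\mathbb{Z}\vec{b}$ inside $\mathbb{Z}^2$, since $\langle\vec{a},\vec{b}\rangle$ is (up to sign) the determinant of the matrix with columns $\vec{a},\vec{b}$, which also equals twice the area of the triangle $T$ with vertices $0,\vec{a},\vec{b}$. Setting $d:=\langle\vec{a},\vec{b}\rangle$, the goal becomes $d=1$, and I would argue by contradiction: suppose $d\ge 2$, so that $L$ is a proper sublattice of $\mathbb{Z}^2$ and there exists some $\vec{v}\in\mathbb{Z}^2\setminus L$.

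Since $\vec{a},\vec{b}$ are $\mathbb{Q}$-linearly independent (as $\langle\vec{a},\vec{b}\rangle>0$), I can write $\vec{v}=s\vec{a}+t\vec{b}$ uniquely with $s,t\in\mathbb{Q}$. After translating $\vec{v}$ by an appropriate element of $L$ (which keeps it in $\mathbb{Z}^2\setminus L$), I may assume $0\le s<1$ and $0\le t<1$, with $(s,t)\ne(0,0)$ since $\vec{v}\notin L$.

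The key is now to extract an interior lattice point of $T$ from $\vec{v}$. If $s,t>0$ and $s+t<1$, then $\vec{v}$ itself is in the interior of $T$, contradicting the hypothesis. If $s+t>1$, I would pass to $\vec{w}:=\vec{a}+\vec{b}-\vec{v}=(1-s)\vec{a}+(1-t)\vec{b}$, which is again a lattice point; its barycentric coefficients satisfy $0<1-s<1$, $0<1-t<1$, and $(1-s)+(1-t)=2-s-t<1$, so $\vec{w}$ lies in the open interior of $T$, again a contradiction. The remaining boundary subcases ($s+t=1$, or $s=0$ with $t>0$, or $t=0$ with $s>0$) place $\vec{v}$ on one of the three closed edges of $T$; in the intended application inside the proof of Proposition \ref{prop4}, these are excluded because the two edges emanating from the origin are primitive (the Newton-polygon segments $(m_1,n_1)$ and $(m_1',n_1')$ have coprime entries), and the third edge contains no lattice point besides its endpoints by saturatedness of $\zeta\prec\xi$ (otherwise one could insert a new Newton polygon strictly between $\zeta$ and $\xi$). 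Hence $d=1$.

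The main obstacle I anticipate is not the lattice-point argument itself, which is essentially the standard fact behind Pick's theorem for lattice triangles of area $1/2$, but the bookkeeping around the boundary cases: the lemma as stated forbids only interior lattice points, while the clean argument needs that the vertices are the only lattice points of $\partial T$. This gap is bridged by the ambient Newton-polygon hypotheses (primitivity of segments plus saturatedness), which the proof of Proposition \ref{prop4} implicitly supplies when invoking the lemma.
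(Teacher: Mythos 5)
Your argument is correct and rests on the same core idea as the paper's: if $v=\langle\vec{a},\vec{b}\rangle\ge 2$ then $L=\mathbb{Z}\vec{a}+\mathbb{Z}\vec{b}$ has index $v$ in $\mathbb{Z}^2$, and a coset representative placed in the fundamental parallelogram produces a lattice point inside the triangle. The paper realizes this by choosing $\vec{x}$ with $\langle\vec{a},\vec{x}\rangle=1$ (which pins the $\vec{b}$-coordinate to $1/v$) and translating by multiples of $\vec{a}$; you take an arbitrary representative $s\vec{a}+t\vec{b}$ with $0\le s,t<1$ and use the reflection $\vec{a}+\vec{b}-\vec{v}$ when $s+t>1$. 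The substantive difference is your explicit treatment of the boundary cases, and you are right to flag them: as stated the lemma is literally false (e.g.\ $\vec{a}=(1,0)$, $\vec{b}=(1,2)$ gives $\langle\vec{a},\vec{b}\rangle=2$, yet the triangle has no interior lattice point, only the boundary point $(1,1)$). The paper's own proof has the same gap in two places: choosing $\vec{x}$ with $\langle\vec{a},\vec{x}\rangle=1$ already presupposes that $\vec{a}$ is primitive, and the final assertion that $s\vec{a}+(1/v)\vec{b}$ lies in the \emph{interior} fails when $s=0$ or $s+1/v=1$, i.e.\ exactly when the point lands on an edge. Your observation that the intended application supplies primitivity of the two segments and, via saturatedness, the absence of interior lattice points on the third edge is precisely what closes this gap, so your write-up is in fact more complete than the paper's; ideally the lemma should be restated with ``no lattice point other than the vertices in the closed triangle'' as the hypothesis.
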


\begin{proof}
Assume $v:=\langle \vec{a},\vec{b} \rangle \ge 2$,
and lead a contradiction. 
Choose $\vec{x}=(x,y)$ with $\langle \vec{a},\vec{x} \rangle=1$ and $x,y \in \mathbb{Z}$. 
Choose $r\in \mathbb{Z}$, $s,t\in \mathbb{Q}$ such that $\vec{x}+r\vec{a}=s\vec{a}+t\vec{b}$ with $0\le s\le 1$.
Taking $\langle - ,\vec{a}\rangle$ on the both sides, 
we have $\langle \vec{a},\vec{x}\rangle+r\langle \vec{a},\vec{a}\rangle =s\langle \vec{a},\vec{a}\rangle+t\langle \vec{a},\vec{b}\rangle$,
whence $1=tv$. 
We have $\vec{x} + r\vec{a} = s\vec{a} +(1/v)\vec{b}$.
As $ s\in (1/v)\mathbb{Z}$ with $0 \le s < 1$,
we have $s+(1/v)\le 1$.
This means that $\vec{x} + r\vec{a}$ is a lattice point
in the interior of the triangle that is the convex hull of $(0,0)$, 
$\vec{a}$ and $\vec{b}$. This is a contradiction.
\end{proof}

\section{Main results}

Our main theorem is the following.

\begin{theorem}\label{theorem1}

Let $\zeta$ and $\xi$ be Newton polygons of height $h$ with $\zeta \prec \xi$. 
Set $M_0:=N_{\zeta}$ and $M_c:=N_{\xi}$ with $c=c(\zeta,\xi)$. 
Then there exist ${\rm DM_1}$'s $M_1,\ldots, M_{c-1}$ such that $A^{(0)}<A^{(1)}\cdots<A^{(c-1)}<A^{(c)}$, 
where $A^{(i)}\in\{0,1\}^h$ are the sequences corresponding to $M_i$ ($0 \leq i \leq c$).

\end{theorem}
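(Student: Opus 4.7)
My plan is the induction sketched in the introduction and illustrated in Example~\ref{example1}. First reduce to the \emph{essential case}: $\xi$ consists of exactly two segments and $\zeta\prec\xi$ is saturated. For the reduction, insert a maximal chain of saturated refinements $\zeta=\zeta_0\prec\zeta_1\prec\cdots\prec\zeta_r=\xi$. Since $c(\zeta,\xi)$ is twice the area between the polygons, $c(\zeta,\xi)=\sum_i c(\zeta_{i-1},\zeta_i)$, so it suffices to construct a chain of the correct length for each saturated step. In each saturated step the two polygons agree outside a contiguous range of two adjacent segments of $\zeta_i$; after factoring out the common direct summand (using the compatibility of $\prec$ with direct sums recorded after Definition~3.3) one is reduced to the essential case.

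\textbf{The two slope regimes.} In the essential case, \cite{harashita1}, Section~8.4, handles the subcases $\lambda_2<\lambda_1\le 1/2$ and $1/2\le\lambda_2<\lambda_1$. Thus I focus on the remaining case $\lambda_2<1/2<\lambda_1$ and proceed by induction on the height $h:=m_1+n_1+m_2+n_2$ of $\xi$.

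\textbf{Inductive step.} Write $\xi=(m_1,n_1)+(m_2,n_2)$ and $\zeta=(m_1',n_1')+\cdots+(m_c',n_c')$, where $c=c(\zeta,\xi)$ equals the number of segments of $\zeta$ by Proposition~\ref{prop4}. After possibly reflecting, assume $m_1\ge m_1'$ and $n_1\ge n_1'$, and set
\[
\xi':=(m_1-m_1',\,n_1-n_1')+(m_2,n_2),\qquad \zeta':=(m_2',n_2')+\cdots+(m_c',n_c').
\]
Then $\zeta'\prec\xi'$ is saturated of strictly smaller height with $c(\zeta',\xi')=c-1$. Using the explicit description of $A_\xi$ in Lemma~\ref{lem1}, perform the two specific adjacent ``$01\to 10$''-swaps indicated in Example~\ref{example1}, producing $A_\xi<A_\xi^-<A_\xi^{--}$. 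The central claim is the combinatorial identity
\[
A_\xi^{--}=A_{m_1',n_1'}\oplus A_{\xi'}^-,
\]
where $A_{\xi'}^-$ denotes the sequence obtained from $A_{\xi'}$ (also given by Lemma~\ref{lem1}) via the analogous single swap. Granting this, strengthen the inductive hypothesis to record that the chain provided for $\zeta'\prec\xi'$ begins with the swap $A_{\xi'}\to A_{\xi'}^-$; its remaining $c-2$ steps form a chain from $A_{\xi'}^-$ down to $A_{\zeta'}$. Taking the direct sum with $A_{m_1',n_1'}$ gives a chain of length $c-2$ from $A_{m_1',n_1'}\oplus A_{\xi'}^-$ to $A_{m_1',n_1'}\oplus A_{\zeta'}=A_\zeta$. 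Concatenating with the two peeling swaps from $A_\xi$ produces a chain of total length $2+(c-2)=c=c(\zeta,\xi)$, as required.

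\textbf{Main obstacle.} The combinatorial identity $A_\xi^{--}=A_{m_1',n_1'}\oplus A_{\xi'}^-$ is the technical heart of the argument. Its verification demands a careful tracking of the $(F,V^{-1})$-diagram of the explicit sequence of Lemma~\ref{lem1} in the regime $\lambda_2<1/2<\lambda_1$ through the two prescribed swaps, isolating the summand $A_{m_1',n_1'}$, and then matching block by block with the analogous sequence built from $\xi'$ and its $-$-swap. Simultaneously, ensuring that the first step of the inductive chain for $\xi'$ is genuinely $A_{\xi'}\to A_{\xi'}^-$ forces a strengthening of the induction hypothesis; its base case, when $\zeta'$ consists of a single segment and $c(\zeta',\xi')=1$, must also be verified directly. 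I expect this uniform combinatorial bookkeeping, carried out for all admissible shapes of the first segment $(m_1',n_1')$ of $\zeta$ (and its symmetric counterpart), to be the most delicate part of the proof.
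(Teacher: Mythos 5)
Your skeleton is the same as the paper's: reduce to the case where $\zeta\prec\xi$ is saturated and $\xi$ has two segments, quote \cite{harashita1}, 8.4 for $\lambda_2<\lambda_1\le 1/2$ and $1/2\le\lambda_2<\lambda_1$, and in the remaining regime $\lambda_2<1/2<\lambda_1$ induct on the height, peeling one simple summand off $N_\xi^{--}$ and strengthening the inductive statement so that the chain for $\xi'$ ends at $A_{\xi'}^-$ (this is exactly the ``claim'' in the paper's proof, and your use of Proposition~\ref{prop4} to get $c(\zeta',\xi')=c-1$ matches as well). But the proposal has a genuine gap at precisely the point you flag as the ``main obstacle'': the identity $A_\xi^{--}=A_{\rho}\oplus A_{\xi'}^-$ is not proved, only asserted. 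That identity is the entire technical content of the paper's Propositions~\ref{prop1}, \ref{prop2} and \ref{prop3}, which verify it by tracking how the two swaps break the single $(F,V^{-1})$-cycle of $A_\xi^-$ into two cycles, identifying one cycle with $N_\rho$ (via \cite{harashita1}, Lemma~5.6, which forces the determinant-one condition $an_1-bm_1=1$, i.e.\ the area-one condition matching the first segment of the saturated $\zeta$) and the other, after one further exchange, with $N_{\xi'}$. Without this, the induction does not close, so as it stands you have an outline rather than a proof.

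Two further points would need repair even granting the bookkeeping. First, ``after possibly reflecting, assume $m_1\ge m_1'$ and $n_1\ge n_1'$'' is not available: the operation $S\mapsto S^-$ (swap the \emph{first} adjacent $0\,1$) is fixed, so which segment of $\xi$ gets shortened is dictated by the combinatorics, not chosen by you. The paper must split into cases: for $n_1-m_1>1$ (and the degenerate variants in Proposition~\ref{prop1}) the peeled summand comes from $(m_1,n_1)$ and $\xi'=(m_1-a,n_1-b)+(m_2,n_2)$, whereas for $n_1-m_1=1$, $n_2>1$ (Proposition~\ref{prop2}) it comes from $(m_2,n_2)$ and $\xi'=(m_1,n_1)+(m_2-a,n_2-b)$, and the boundary cases $m_1\in\{0,1\}$, $n_2\in\{0,1\}$ (Proposition~\ref{prop3}) where $N_\xi^{--}$ splits completely must be handled separately; a genuine duality argument replacing this case analysis is not supplied. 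Second, a small slip: the chain produced by the swaps runs $A_\xi^{--}<A_\xi^-<A_\xi$, not the reverse as written. The overall architecture is right, but the heart of the proof is missing.
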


The proof will be given in the next section.
Let us see that
this theorem implies Corollary~\ref{corollary1} in Introduction.
\begin{proof}[Proof of Corollary~\ref{corollary1}.]
It suffices to show the case that
$\zeta$ contains no \'etale segment $(1,0)$.
Indeed write $\zeta = f(1,0) + \zeta'$ and $\xi = f(1,0) + \xi'$ 
for $f \in \mathbb Z_{\geq 0}$,
where $\zeta'$ has no \'etale segment $(1,0)$.
If there exists a specialization from $H(\xi')$ to $H(\zeta')$,
then considering the direct sum of it and $H(f(1,0))$
we have a specialization from $H(\xi)$ to $H(\zeta)$.

Assume that $\zeta$ has no \'etale segment $(0,1)$.
Let $k$ be an algebraically closed field.
Theorem~\ref{theorem1} in particular says that
there exists a ${\rm DM_1}$ ${\mathcal N}$ over $R=k[\![t]\!]$ such that
${\mathcal N}_k$ is isomorphic to $N_\zeta$ and ${\mathcal N}_K$
is isomorphic to $N_\xi$ over an algebraically closed field.
There exists a lifting of ${\rm DM_1}$ to a display $\mathcal{M}$ over $R$ 
(cf. \cite{harashita2}, Lemma 4.1).
Thanks to the theory of display by Zink \cite[Theorem 103 on p.\ 221]{Display},
we get the $p$-divisible group $\mathcal{X}$ associated with $\mathcal{M}$,
which is a specialization from $H(\xi)$ to $H(\zeta)$.
Indeed by \cite{oort2} the special fiber $\mathcal{X}_k$ is isomorphic to $H(\zeta)$
and the generic fiber $\mathcal{X}_K$ is isomorphic to $H(\xi)$
over an algebraically closed field.
\end{proof}


Here is another corollary.
Consider a $p$-divisible group $X$ over $k$.
Let $\operatorname{Def}(X) = \operatorname{Spf}(\Gamma)$ be the local deformation space in characteristic $p$ (cf. \cite[1.9]{oort3}).
Set $D(X) = \operatorname{Spec}(\Gamma)$
and let $\mathcal{X} \to D(X)$ be the $p$-divisible group
over $D(X)$ obtained via the category equivalence obtained in \cite[2.4.4]{Jong}.
For a $p$-divisible group $\mathcal{Y} \to S$ over a scheme $S$
and for a $p$-divisible group $Y$ over $k$,
we consider
\[
\mathcal{C}_{Y}(S) = \{s\in S \mid \mathcal{Y}_s \text{ is isomorphic to } Y
\text{ over an algebraically closed field}\}.
\]
To the dimension formula of $\mathcal{C}_{X}(D(X))$,
Oort gave three proofs in \cite{oort3}.
This paper gives the fourth proof.

\begin{corollary}\label{CorDimensionFormula}
Let $X$ be a $p$-divisible group of Newton polygon $\xi$ of height $h$ and of dimension $d$, we have
\[
\dim \mathcal{C}_X(D(X)) = c(\xi),
\]
where we set $c(\xi):=c(\sigma,\xi)$ with $\sigma = (h-d,d)$.
\end{corollary}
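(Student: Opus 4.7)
The plan is to prove $\dim \mathcal{C}_X(D(X)) = c(\xi)$ by combining Theorem~\ref{theorem1} with Oort's minimality theorem and a dimension count on the Ekedahl--Oort stratification of the universal deformation space. First I would reduce to the case $X = H(\xi)$: by \cite{oort3} the dimension of the central leaf depends only on the Newton polygon of $X$, and the main theorem of \cite{oort2} then identifies
\[
\mathcal{C}_{H(\xi)}(D(H(\xi))) = S_{A_\xi}(D(H(\xi))),
\]
the Ekedahl--Oort stratum of type $A_\xi$, because $Y[p] \cong H(\xi)[p]$ forces $Y \cong H(\xi)$. Thus the problem is to compute $\dim S_{A_\xi}(D(H(\xi)))$, which is now a combinatorial statement about the position of $A_\xi$ inside the $<$-poset of $\{0,1\}^h$.

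Next I would apply Theorem~\ref{theorem1} to the pair $(\xi, \eta_{\mathrm{ord}})$, where $\eta_{\mathrm{ord}} = (h-d)(1,0)+d(0,1)$ is the ordinary Newton polygon (the generic Newton polygon on $D(H(\xi))$). This produces a chain
\[
A_\xi = A^{(0)} < A^{(1)} < \cdots < A^{(\ell)} = A_{\eta_{\mathrm{ord}}}
\]
in the $<$-poset on $\{0,1\}^h$ of length $\ell = c(\xi, \eta_{\mathrm{ord}})$. Each elementary step $A^{(i-1)} < A^{(i)}$ is realised by a one-parameter specialization coming from an adjacent ``$01 \leftrightarrow 10$'' exchange, so the open ordinary stratum $S_{A_{\eta_{\mathrm{ord}}}}$ of dimension $\dim D(H(\xi)) = d(h-d)$ is connected to $S_{A_\xi}$ by a chain of $\ell$ one-parameter deformation directions. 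Purity/equidimensionality of the Ekedahl--Oort stratification on the smooth space $D(H(\xi))$ then forces each $<$-step to drop the dimension by exactly one, yielding
\[
\dim S_{A_\xi}(D(H(\xi))) = d(h-d) - c(\xi, \eta_{\mathrm{ord}}).
\]

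Finally, a short combinatorial computation closes the loop. The area of the right triangle with vertices $(0,0),\,(h-d,0),\,(h,d)$ equals $d(h-d)/2$, so $c(\sigma, \eta_{\mathrm{ord}}) = d(h-d)$; combined with the additivity $c(\sigma, \eta_{\mathrm{ord}}) = c(\sigma, \xi) + c(\xi, \eta_{\mathrm{ord}})$ along the chain $\sigma \prec \xi \prec \eta_{\mathrm{ord}}$ of polygons, this gives
\[
d(h-d) - c(\xi, \eta_{\mathrm{ord}}) = c(\sigma, \eta_{\mathrm{ord}}) - c(\xi, \eta_{\mathrm{ord}}) = c(\sigma, \xi) = c(\xi),
\]
as required. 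The main obstacle is the passage from chain length to dimension in the middle step: Theorem~\ref{theorem1} makes the lower bound $\dim \mathcal{C}_X(D(X)) \ge c(\xi)$ explicit (each exchange contributes a deformation direction), but the matching upper bound requires a purity/equidimensionality argument ensuring that no $<$-step can contribute more than one dimension of deformation.
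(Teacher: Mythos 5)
Your opening reduction (passing to $H(\xi)$ and identifying the central leaf with the Ekedahl--Oort stratum of type $A_\xi$ via the minimality theorem of \cite{oort2}) agrees with the paper, and your closing combinatorics ($c(\sigma,\chi)=d(h-d)$ for $\chi=(h-d)(1,0)+d(0,1)$, plus additivity of $c$ along $\sigma\prec\xi\prec\chi$) is correct. The gap is the middle step, and you have flagged it yourself: the assertion that ``purity/equidimensionality of the Ekedahl--Oort stratification forces each $<$-step to drop the dimension by exactly one'' is not a quotable result, and it is exactly the content that needs proof. Two things go wrong. First, the chain produced by Theorem~\ref{theorem1} lives in $\{0,1\}^h$; each step $A^{(i-1)}<A^{(i)}$ only yields, via Proposition 3.6, the existence of \emph{some} ${\rm DM_1}$ over $k[\![t]\!]$ with the prescribed fibers. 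Converting this abstract specialization into the closure relation $S_{A^{(i-1)}}\subset \overline{S_{A^{(i)}}}$ inside $D(H(\xi))$, together with nonemptiness of the intermediate strata (the $A^{(i)}$ are not of the form $A_\eta$ for Newton polygons $\eta$), already requires an argument you have not given. Second, even granting the closure relations, de Jong--Oort purity concerns Newton strata, not Ekedahl--Oort strata; so neither your lower bound nor your upper bound is actually established by what you wrote.

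The paper avoids geometry at this point entirely: it quotes the Wedhorn--Moonen dimension formula (\cite[6.10]{wedhorn}, \cite[2.1.2]{moonen}), which says $\dim \mathcal{S}_{Y[p]}(D(Y))=\ell(A)$ with $\ell(A)=\#\{(i,j)\mid i<j,\ \delta_i=0,\ \delta_j=1\}$. Since each elementary exchange of ``$0\ 1$'' for ``$1\ 0$'' changes $\ell$ by exactly one, a chain as in Theorem~\ref{theorem1} gives $\ell(B)+c\le\ell(A)$, and applying this to \emph{both} pairs $\sigma\prec\xi$ and $\xi\prec\chi$ sandwiches the unknown quantity:
\[
c(\sigma,\xi)=\ell(A_\sigma)+c(\sigma,\xi)\le \ell(A_\xi)\le \ell(A_\chi)-c(\xi,\chi)=(h-d)d-c(\xi,\chi)=c(\sigma,\xi).
\]
If you want to repair your argument, replace the purity appeal by this citation and add the second chain (from $\sigma$ up to $\xi$), which is what supplies the lower bound; a single chain from $\xi$ to $\chi$ cannot pin down $\dim S_{A_\xi}$ on its own.
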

\begin{proof}
Let ${\mathcal S}_{Y[p]}(D(Y))$ be
the Ekedahl-Oort stratum on $D(Y)$, i.e., the locally closed subset
consisting of points whose $p$-kernel types are the same as that of $Y[p]$
(cf. \cite{oort3}, 1.6).
We consider it as a locally closed subscheme of $D(Y)$
by giving it the induced reduced structure.
By Oort \cite{oort3}, 7.19 and 7.9, we have
\[
\dim \mathcal{C}_X(D(X))=\dim \mathcal{C}_{H(\xi)}(D(H(\xi))) = \dim {\mathcal S}_{H(\xi)[p]}(D(H(\xi))).
\] 
Combining Wedhorn \cite[6.10]{wedhorn} and Moonen \cite[2.1.2]{moonen},
$\dim {\mathcal S}_{Y[p]}(D(Y))$
this is equal to the length $\ell(A)$ of the type $A$ of $Y[p]$, where
$\ell(A)=\# \{(i,j) \mid i<j,\ \delta_i=0,\ \delta_j=1\}$
for $A=\delta_1\cdots\delta_h$.
Let $\chi = (h-d)(1,0)+d(0,1)$.
By Theorem \ref{theorem1}, we have 
\[
\ell(A_\sigma)+c(\sigma,\xi)\le \ell(A_\xi) \le \ell(A_\chi)-c(\xi,\chi).
\]
The corollary follows from the obvious cases $\ell(A_\sigma)=0=c(\sigma)$ and $\ell(A_\chi) = (h-d)d = c(\chi)$, using the formula $c(\zeta,\xi) = c(\xi)-c(\zeta)$.
\end{proof}

\begin{remark}
In the proof of Corollary \ref{CorDimensionFormula},
we used the dimension formula of Ekedahl-Oort strata
due to Wedhorn \cite[6.10]{wedhorn} and Moonen \cite[2.1.2]{moonen},
but, as type $A$ in the proof is combinatorially complicated
(no explicit general form of $A$ is known), determinining $\ell(A)$ is a non-trivial problem. 
\end{remark}

\section{Proof}
In this section, we prove Theorem~\ref{theorem1}.
The essential case is that $\xi$ consists of two segments, i.e.,
$\xi = (m_1, n_1) + (m_2, n_2)$ with $\gcd(m_1, n_1)=1$ and $\gcd(m_2, n_2)=1$.
Here we treat the case of $\lambda_2 < 1/2 < \lambda_1$,
where $\lambda_i = n_i/(m_i+n_i)$ for $i=1,2$,
since the other case $\lambda_2<\lambda_1 \le 1/2$ or 
$1/2\le \lambda_2 < \lambda_1$ has already been studied in \cite{harashita1}, 8.4.

Let $\xi = (m_1, n_1) + (m_2, n_2)$ with $\lambda_2 < 1/2 < \lambda_1$,
until {\it Proof of Theorem \ref{theorem1}}.
Let $N_\xi$ be the minimal ${\rm DM_1}$ of $\xi$,
and we denote the sequence associated with $N_\xi$ as $A_\xi$.
In Lemma~\ref{lem1}, we have seen that the sequence $A_\xi$ is described as
$$\underbrace{1^A_1\cdots 1^A_{m_1}}_{m_1}
\underbrace{0^A_{m_1+1}\cdots 0^A_{n_1}}_{n_1-m_1}
\underbrace{1^B_1\cdots 1^B_{n_2}}_{n_2}\ 
\underbrace{0^A_{n_1+1}\cdots 0^A_{m_1+n_1}}_{m_1}
\underbrace{1^B_{n_2+1}\cdots 1^B_{m_2}}_{m_2-n_2}\ 
\underbrace{0^B_{m_2+1}\cdots 0^B_{m_2+n_2}}_{n_2}.$$
For a sequence $S$ of $0$ and $1$, let $S^-$ denote the sequence obtained by
exchanging ``first'' adjacent subsequence ``$0\ 1$'' for ``$1\ 0$'' in $S$.
To simplify, we write $(A_\xi^-)^- = A_\xi^{--}$.
Note $A_\xi^{--} < A_\xi^- < A_\xi$.
Let $N^{-}_{\xi}$ and $N^{--}_{\xi}$ be the ${\rm DM_1}$'s associated with
$A^{-}_{\xi}$ and $A^{--}_{\xi}$ respectively.
We shall prove Theorem~\ref{theorem1} by induction, using
the following three propositions.
\begin{proposition}\label{prop1}
If 
one of the following conditions
\begin{itemize}
\item[(1)] $n_1-m_1 > 1$ and $n_2 > 0$,
\item[(2)] $n_1 - m_1 = 1$ with $m_1 > 0$ and $n_2 = 1$,
\item[(3)] $n_2 = 0$ and $m_1 > 1$
\end{itemize}
holds, then we have
$$N^{--}_{\xi}=N^-_{\xi'}\oplus N_{\rho},$$
with $\rho=(a,b)$ and $\xi'=(m_1-a,n_1-b)+(m_2,n_2)$,
where $\rho$ is uniquely determined by $\xi$ so that the area of
the region surrounded by $\xi$, $\xi'$ and $\rho$ in the picture below is one.
\\
\\
\includegraphics[width=200mm]{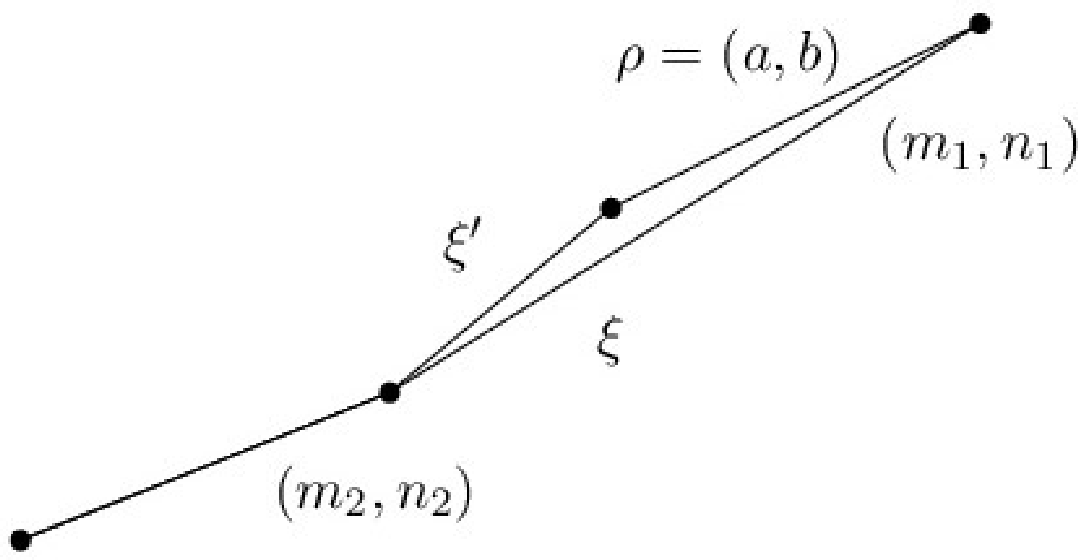}
\end{proposition}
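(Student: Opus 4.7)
The plan is to verify the claimed decomposition by comparing both sides as explicit sequences and then identifying them via the $(F,V^{-1})$-diagram.

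First I would apply Lemma~\ref{lem1} to write down $A_\xi$ and spell out the two ``$0\ 1$''-swaps case by case. In case~(1), the first ``$0\ 1$'' sits at positions $(n_1,\, n_1+1)$, at the junction between the middle block of $0^A$'s (of length $n_1-m_1$) and the block of $1^B$'s (of length $n_2$); after the first swap a fresh ``$0\ 1$'' appears at $(n_1-1,\, n_1)$, and performing both swaps yields an explicit closed-form description of $A_\xi^{--}$. Cases~(2) and~(3) are analogous, with block~2 or block~3 degenerating, and are handled by the same position-counting argument with adjusted indices.

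Next I would pin down $\rho=(a,b)$ via Lemma~\ref{SaturatednessImpliesAreaOne}: the area-one condition in the picture forces $(a,b)$ to be the unique coprime pair with $0 < a \leq m_1$, $0 < b \leq n_1$, and $n_1 a - m_1 b = 1$. Setting $\xi' = (m_1-a,\, n_1-b) + (m_2, n_2)$, the inequality $\lambda_2 < 1/2 < \lambda_1$ is inherited by the slopes of $\xi'$, so Lemma~\ref{lem1} applies to $\xi'$ too, and after one further ``$0\ 1$''-swap one obtains an explicit form for $A_{\xi'}^-$.

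To identify $N_\xi^{--}$ with $N_{\xi'}^- \oplus N_\rho$, I would use the fact that the $(F,V^{-1})$-diagram of any ${\rm DM_1}$ decomposes as a disjoint union of cycles (each vertex has exactly one outgoing and one incoming arrow), with each cycle corresponding to an indecomposable direct summand, and the subsequence induced on the cycle's vertex set by increasing position giving the sequence of that summand. Tracing the cycle of $A_\xi^{--}$ that starts at the leftmost $1$ belonging to the prospective $N_\rho$-summand, I would show that it closes up after exactly $a+b$ steps and that its induced subsequence is $1^a 0^b = A_{a,b}$; the complementary cycle, of length $h-(a+b)$, then reproduces $A_{\xi'}^-$, finishing the identification.

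The main obstacle will be the cycle-trace bookkeeping. Because $F$ sends a position $i$ with $\delta_i = 0$ to the index of $i$ among the zeros, and $V^{-1}$ sends the $k$-th $1$ to position $v+k$ (with $v$ the total number of zeros), cycle membership depends delicately on cumulative counts that shift after each of the two swaps; moreover, each of the cases~(1)--(3) must be checked separately to confirm no boundary anomaly. The Bezout-style identity $n_1 a - m_1 b = 1$ from Lemma~\ref{SaturatednessImpliesAreaOne} is the arithmetic input that should make the short cycle close up after exactly $a+b$ steps.
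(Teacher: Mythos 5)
Your plan is correct and essentially reproduces the paper's argument: write out $A_\xi$ via Lemma~\ref{lem1}, perform the two prescribed ``$0\ 1$''-swaps, split the $(F,V^{-1})$-diagram of $A_\xi^{--}$ into two cycles, identify the all-$A$ cycle with $N_\rho$ through the relation $an_1-bm_1=1$ (which the paper obtains by citing \cite{harashita1}, Lemma~5.6, and which you would instead re-derive by tracing the cycle directly), and match the complementary cycle with $A_{\xi'}^-$ obtained from Lemma~\ref{lem1} applied to $\xi'$ (whose slopes indeed still satisfy $\lambda_2<1/2<\lambda_1'$). The only caution is that your ``adjusted indices'' for the degenerate cases are not uniform: in case (2) the second swap sits at $0^A_{h_1}\,1^B_2$ (the far end of the zero run) and in case (3) at $0^A_{h_1-1}\,1^B_1$, rather than at positions $(n_1-1,n_1)$ as in case (1), which is exactly why the paper treats these cases separately.
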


\begin{proof}
First, let us see the case (1).
Since $\gcd(m_1, n_1) = 1$,
we have $m_1 > 0$.
The sequences $A = A_{m_1,n_1}$ and $B = A_{m_2,n_2}$
associated with $N_{m_1,n_1}$ and $N_{m_2,n_2}$ are
\begin{eqnarray*}
A &=& \underbrace{1^A_1\cdots 1^A_{m_1}}_{m_1}\ \underbrace{0^A_{m_1+1}\cdots 0^A_{n_1}}_{n_1-m_1}\ \underbrace{0^A_{n_1+1}\cdots 0^A_{m_1+n_1}}_{m_1},\\
B &=& \underbrace{1^B_1\cdots 1^B_{n_2}}_{n_2}\ \underbrace{1^B_{n_2+1}\cdots 1^B_{m_2}}_{m_2-n_2}\ \underbrace{0^B_{m_2+1}\cdots 0^B_{m_2+n_2}}_{n_2}
\end{eqnarray*}
respectively.
By Lemma~\ref{lem1}, the sequence $A \oplus B$
of $N_{\xi}=N_{m_1,n_1}\oplus N_{m_2,n_2}$ is
\\
\\
$$N_{\xi} : \def\objectstyle{\scriptstyle}\xymatrix@
=0.000001pt{1^A_1\ar@/_20pt/[rrrrrrrrr] &\cdots &1^A_{m_1} &0^A_{m_1+1}\ar@/_20pt/[lll] &\cdots &0^A_{n_1} &1^B_1\ar@/_20pt/[rrrrrr] &\cdots &1^B_{n_2} &0^A_{n_1+1} &\cdots &0^A_{m_1+n_1}\ar@/_20pt/[llllll] &1^B_{n_2+1} &\cdots &1^B_{m_2} &0^B_{m_2+1}\ar@/_20pt/[lllllllll] &\cdots &0^B_{m_2+n_2}\ar@/_20pt/[lllllllll]}.$$ 
Here, we write only arrows of $F$ and $V^{-1}$
which are necessary to check the structure of $N_{\xi}$.
Of course the $(F,V^{-1})$-diagram of $A\oplus B$ consists of 
the cycles of $A$ and $B$:
$$\xymatrix{\ \ar@{}[d]|{\normalsize{\mbox{$N_{m_1, n_1} :$}}} 
& 0^A_{m_1+1} \ar[d]&\cdots \ar[l] &0^A_{n_1}\ar[l]  
& &\ar@{}[d]|{\normalsize{\mbox{$N_{m_2, n_2} :$}}}  &0^B_{m_2+1} \ar[d]
&\cdots \ar[l] &1^B_{n_2} \ar[l]
\\ \ &1^A_1 \ar[r]  &\cdots \ar[r] &0^A_{h_1}, \ar[u]& & &1^B_1 \ar[r]
&\cdots \ar[r] &0^B_{h_2},\ar[u]}$$
where $h_1 = m_1 + n_1$ and $h_2 = m_2 + n_2$. 
The ${\rm DM_1}$ $N^-_{\xi}$ is obtained by exchanging 
$0^A_{n_1}$ and $1^B_1$ in $A_{\xi}$.
The sequence $A^-_{\xi}$ corresponding to $N^-_{\xi}$ is described as
\[
1^A_1\cdots 1^A_{m_1}\ 0^A_{m_1+1}\cdots 
0^A_{n_1-1}\ \underline{1^B_1\ 0^A_{n_1}}\ 1^B_2\cdots 1^B_{n_2}\ 0^A_{n_1+1}
\cdots 0^A_{h_1}\ 1^B_{n_2+1}\cdots 1^B_{m_2}\ 0^B_{m_2+1}
\cdots 0^B_{h_2}.
\]
We claim that $N^-_{\xi}$ consists of one cycle.
Indeed, by the exchange of $0^A_{n_1}$ and $1^B_1$,
the destination of the vector from 
$0^B_{m_2+1}$ is changed to $0^A_{n_1}$,
and the destination of the vector from 
$0^A_{m_1+n_1}$ is changed to $1^B_1$.
The claim follows from the diagram of $N_\xi^-$:
\begin{eqnarray*}
\xymatrix{& \ar@{}[l] & \ar@{}[d]|{\normalsize{\mbox{$N^-_\xi :$}}}
& 0^A_{m_1+1} \ar[d]&\cdots \ar[l] &0^A_{n_1}\ar[l] 
&0^B_{m_2+1} \ar@{.>}[d] |\times \ar[l]
&\cdots \ar[l] &1^B_{n_2} \ar[l]  & \ar@{}[l] & \ar@{}[l] 
\\ & & \ar@{}[l] & 1^A_1 \ar[r]  &\cdots \ar[r] 
&0^A_{h_1} \ar@{.>}[u] |\times \ar[r] &1^B_1 \ar[r]
&\cdots \ar[r] &0^B_{h_2}.\ar[u] \ar@{}[r] & \ar@{}[r] 
& \ar@{}[u]^{\normalsize{\mbox{($\star$)}}} } 
\end{eqnarray*}
We get the ${\rm DM_1}$ $N^{--}_{\xi}$ by exchanging 
$0^A_{n_1-1}$ and $1^B_1$ in $A^-_{\xi}$.
The sequence $A^{--}_{\xi}$ corresponding to $N^{--}_{\xi}$
is described as

$$\def\objectstyle{\scriptstyle}\xymatrix@
=0.000001pt{1^A_1\ar@/_20pt/[rrrrrrrrrrrr] &\cdots &1^A_{m_1} &0^A_{m_1+1}\ar@/_20pt/[lll] &\cdots &0^A_{n_1-2} &1^B_1\ar@/_20pt/[rrrrrrrr] &0^A_{n_1-1} &0^A_{n_1} &1^B_2 &\cdots &1^B_{n_2} &0^A_{n_1+1} \cdots &0^A_{h_1}\ar@/_20pt/[llllll] &1^B_{n_2+1} &\cdots &1^B_{m_2} &0^B_{m_2+1}\ar@/_20pt/[lllllllll] &\cdots&0^B_{h_2}\ar@/_20pt/[llllllll]}.$$\\
We shall use these arrows to check the structure of cycles of $N^{--}_{\xi}$.
To consider the cycles of $N^{--}_{\xi}$, 
we rewrite the $(F,V^{-1})$-cycle of $N_\xi^-$ as follows:
$$\xymatrix{\ar@{}[d]|{\normalsize{\mbox{$N^-_\xi :$}}} 
&\bullet \ar[d] &\cdots \ar[l] &0^A_{n_1-1} \ar[l] 
&0^A_{h_1-1} \ar[l] &\cdots \ar[l] &0^A_{n_1} \ar[l]
\\&\bullet \ar[r] &\cdots \ar[r] &0^A_{h_1} \ar[r] 
&1^B_1 \ar[r] &\cdots \ar[r] &0^B_{m_2+1}. \ar[u]}$$
We claim that $N^{--}_{\xi}$ consists of two cycles. 
By the exchange of $0^A_{n_1-1}$ and $1^B_1$,
the destination of the vector from $0^A_{h_1}$
is changed to $0^A_{n_1-1}$, and the destination of the vector from
$0^A_{h_1-1}$ is changed to $1^B_1$. The claim follows from the diagram:\\
$$\xymatrix{\ar@{}[d]|{\normalsize{\mbox{$N^{--}_\xi :$}}} 
&\bullet \ar[d]&\cdots \ar[l] &0^A_{n_1-1}\ar[l] 
&0^A_{h_1-1} \ar@{.>}[l] |\times \ar[d] 
&\cdots \ar[l] &0^A_{n_1} \ar[l]
\\ &\bullet \ar[r]  &\cdots \ar[r] &0^A_{h_1} \ar@{.>}[r] |\times \ar[u] &1^B_1 \ar[r]
&\cdots \ar[r] &0^B_{m_2+1}. \ar[u]}$$
Let $C_1$ be the cycle containing $0^A_{h_1}$, 
and let $C_2$ be the cycle containing $1^B_1$.
It suffices to show the following properties:

\begin{enumerate}
\item[(a)] $C_1$ is the $(F,V^{-1})$-diagram of $N_\rho$,
\item[(b)] $C_2$ is the $(F,V^{-1})$-diagram of $N^-_{\xi'}$.
\end{enumerate}
As $C_1$ is a part of the cycle of simple ${\rm DM_1}$ $N_{m_1,n_1}$,
the sequence corresponding to $C_1$ is written as follows:
$$\underbrace{1^A\cdots 1^A}_{a}\ \underbrace{0^A\cdots 0^A_{m_1+n_1}}_{b}$$
Since this cycle coincides with the cycle obtained from $A$ by
applying \cite{harashita1}, Lemma 5.6 to 
the adjacent $0_{n_1-1}^A\ 0_{n_1}^A$,
we have $a n_1 - b m_1 = 1$.
Hence the property (a) holds.

Next we consider the cycle $C_2$.
We can write $C_2$ as follows.\\ 
\\
$$\def\objectstyle{\scriptstyle}
\xymatrix@
=0.000001pt{1^A\ar@/_30pt/[rrrrrrrrrrr] &\cdots
&1^A\ar@/_30pt/[rrrrrrrrrrr]  &0^A \ar@/_30pt/[lll]
&\cdots &0^A &1^B_1\ar@/_30pt/[rrrrrrrrr] 
&0^A_{n_1} &1^B &\cdots &1^B &0^A &\cdots &0^A &0^A_{h_1-1}\ar@/_30pt/[llllllll] &1^B_{n_2+1} &\cdots &1^B\ar@/_30pt/[rrrr] &0^B_{m_2+1}\ar@/_30pt/[lllllllllll] 
&0^B\ar@/_30pt/[lllllllllll] &\cdots &0^B_{h_2}\ar@/_30pt/[lllllllllll]}$$ \\ 
\\
Let $L$ be the sequence associated to $C_2$,
and let $L'$ be the sequence constructed by exchanging 
$1^B_1$ and $0^A_{n_1}$ in $L$. 
Then we have
$$L' = \underbrace{1^A\cdots 1^A}_{m_1-a}\underbrace{0^A\cdots 0^A_{n_1}}_{(n_1-b)-(m_1-a)} \underbrace{1^B_1\cdots 1^B}_{n_2}\ \underbrace{0^A\cdots 0^A}_{m_1-a} \ \underbrace{1^B\cdots 1^B}_{m_2-n_2}\ \underbrace{0^B\cdots 0^B}_{n_2}.$$
By Lemma~\ref{lem1}, 
we see that this sequence is decomposed into two simple ${\rm DM_1}$'s as follows:
$$\underbrace{1^A\cdots 1^A}_{m_1-a}\ \underbrace{0^A\cdots 0^A_{n_1}\ 0^A\cdots 0^A}_{n_1-b}\, 
\oplus\,  \underbrace{1^B_1\ 1^B\cdots 1^B}_{m_2}\ \underbrace{0^B\cdots 0^B}_{n_2}
=A_{m_1-a,n_1-b}\oplus A_{m_2,n_2}.$$
It is clear that $L'$ corresponds to $N_{\xi'}$
with $\xi'=(m_1-a,n_1-b)+(m_2,n_2)$, whence the property (b) holds.

Let us see the case (2).
In this case, for $A_\xi^-$ given by 
exchanging the subsequence $0^A_{n_1}\ 1^B_1$ for $1^B_1\ 0^A_{n_1}$,
we construct $A_\xi^{--}$ by the exchange of $0^A_{h_1}$ and $1^B_2$
with $h_1 = m_1 + n_1$.
The sequence $A_\xi^{--}$ consists of two cycles.
The cycle containing $0^A_{h_1}$ is associated with $N_{1,1}$.
In fact, the sequence of this cycle is $1^B_1\ 0^A_{h_1}$.
Moreover, for the other cycle, 
one can check that this cycle is associated with $N_{\xi'}^-$
with $\xi' = (m_1-1, n_1-1) + (m_2, n_2)$ 
by exchanging $1^A_{m_1}$ and $0^A_{n_1}$.

For the case (3), since $\gcd(m_2, n_2) = 1$, we have $m_2 = 1$.
In this case, we have only one subsequence ``$0\ 1$''
in $A_\xi$, which is $0^A_{h_1}\ 1^B_1$.
We construct $A_\xi^-$ by exchanging this subsequence for $1^B_1\ 0^A_{h_1}$.
We exchange $0^A_{h_1-1}$ and $1^B_1$ to construct $A_\xi^{--}$.
Then $A_\xi^{--}$ consists of two cycles.
The cycle containing $0^A_{h_1-1}$ is associated with $N_{a,b}$
as this cycle consists of some terms of $A$.
Since this cycle coincides with the cycle obtained from $A$ by
applying \cite{harashita1}, Lemma~5.6 to 
the adjacent $0^A_{h_1-1}\ 0^A_{h_1}$, 
we see $an_1 - bm_1 = 1$.
One can check that the other cycle is associated with $N_{\xi'}^-$
with $\xi' = (m_1-a, n_1-b) + (m_2, n_2)$
by exchanging $1^B_1$ and $0^A_{h_1}$.
\end{proof}

\begin{example}\label{example2}
We have dealt with the case of $\xi=(3,5)+(3,2)$ in Example \ref{example1},
where we observed that
\[
N_{(3,5)+(3,2)}^{--} = N_{(1,2)+(3,2)}^- \oplus N_{2,3}
\]
holds.

\end{example}

\begin{proposition}\label{prop2}
If $n_1 - m_1 = 1$ and $n_2 > 1$,
then we have
$$N^{--}_{\xi}=N^-_{\xi'}\oplus N_{\rho},$$
with $\rho=(a,b)$ and $\xi' = (m_1, n_1) + (m_2-a, n_2-b)$,
where $\rho$ is uniquely determined by $\xi$ so that 
the area of the region surrounded by $\xi$, $\xi'$ and $\rho$
in the picture below
is one.\\
\\
\includegraphics[width=200mm]{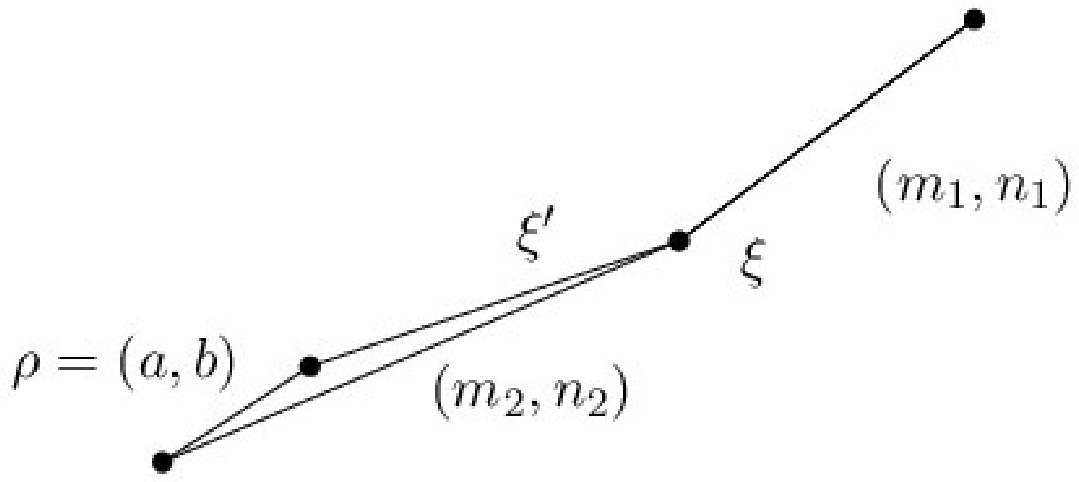}
\end{proposition}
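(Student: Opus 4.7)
The plan is to adapt the proof of Proposition~\ref{prop1}(1) by performing the analogous operations on the second segment $(m_2,n_2)$ instead of the first segment. First, using Lemma~\ref{lem1} and the hypothesis $n_1-m_1=1$, the middle block $0^A_{m_1+1}\cdots 0^A_{n_1}$ of $A_\xi$ degenerates to the single entry $0^A_{n_1}$, so the first ``$0\ 1$'' in $A_\xi$ is $0^A_{n_1}\ 1^B_1$. Exchanging gives $A_\xi^-$, whose first ``$0\ 1$'' is now $0^A_{n_1}\ 1^B_2$; this subsequence exists precisely because of the assumption $n_2>1$. A second exchange then yields
\[
A_\xi^{--}=1^A_1\cdots 1^A_{m_1}\,1^B_1\,1^B_2\,0^A_{n_1}\,1^B_3\cdots 1^B_{n_2}\,0^A_{n_1+1}\cdots 0^A_{h_1}\,1^B_{n_2+1}\cdots 1^B_{m_2}\,0^B_{m_2+1}\cdots 0^B_{h_2},
\]
in which $1^B_1$ and $1^B_2$ are now adjacent.

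Next, I would draw the $(F,V^{-1})$-diagram in exactly the same style as in the proof of Proposition~\ref{prop1}. The first exchange merges the two cycles of $N_\xi$, coming from $N_{m_1,n_1}$ and $N_{m_2,n_2}$, into a single cycle of $N_\xi^-$. I would then track the re-routing of arrows caused by the second exchange $0^A_{n_1}\leftrightarrow 1^B_2$, and verify that the single cycle splits into exactly two cycles $C_1$ and $C_2$, where $C_1$ consists solely of $B$-symbols and $C_2$ contains all $A$-symbols together with the remaining $B$-symbols.

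Finally, I would identify $C_1$ and $C_2$. Because the second exchange, chased through the first one, is equivalent to swapping the adjacent identical entries $1^B_1\,1^B_2$ inside the simple cycle of $N_{m_2,n_2}$, \cite{harashita1}, Lemma~5.6 identifies $C_1$ as a simple $N_\rho$ with $\rho=(a,b)$ satisfying $an_2-bm_2=1$; this is precisely the area-one condition indicated in the picture. For $C_2$: after swapping $1^B_2$ and $0^A_{n_1}$ back inside $C_2$, Lemma~\ref{lem1} rewrites the resulting sequence as $A_{m_1,n_1}\oplus A_{m_2-a,n_2-b}=A_{\xi'}$ with $\xi'=(m_1,n_1)+(m_2-a,n_2-b)$, so $C_2=N_{\xi'}^-$. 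The main obstacle will be the careful diagrammatic bookkeeping of which arrows point where after the two exchanges: one has to confirm that the two re-routings really split the single cycle into $C_1$ and $C_2$ in the claimed way, with the simple slice occurring on the $B$-side. Once this is in hand, the identity $an_2-bm_2=1$ is a direct application of \cite{harashita1}, Lemma~5.6, parallel to the identity $an_1-bm_1=1$ used in case (1) of Proposition~\ref{prop1}.
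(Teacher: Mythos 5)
Your route is the same as the paper's: with $n_1-m_1=1$ the first ``$0\ 1$'' of $A_\xi$ is $0^A_{n_1}\,1^B_1$ and, after exchanging it, the next one is $0^A_{n_1}\,1^B_2$ (this is where $n_2>1$ enters); the second exchange splits the single cycle of $N^-_\xi$ into a cycle made of $B$-symbols only and a cycle containing all $A$-symbols; the former is controlled by \cite{harashita1}, Lemma~5.6 applied to the adjacent pair $1^B_1\,1^B_2$ of $B$, and the latter is identified with $N^-_{\xi'}$ by undoing one exchange and invoking Lemma~\ref{lem1}. However, two concrete details are wrong as written, and they sit exactly in the ``diagrammatic bookkeeping'' you postpone.

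First, you propose to ``swap $1^B_2$ and $0^A_{n_1}$ back inside $C_2$'', but $1^B_2$ does not lie in that cycle: the second exchange re-routes the incoming arrows so that $0^B_{m_2+1}$ now points to $1^B_2$, and the $B$-only cycle is the stretch of the cycle of $B$ running from $1^B_2$ to $0^B_{m_2+1}$, while $1^B_1$ remains in the big cycle (for $\xi=(2,3)+(4,3)$ the small cycle is exactly $\{1^B_2,0^B_5\}$). The exchange to be undone is that of the adjacent pair $1^B_1\ 0^A_{n_1}$ inside the big cycle's sequence $L$; it is the resulting $L'$ that Lemma~\ref{lem1} identifies as $A_{m_1,n_1}\oplus A_{m_2-a,n_2-b}=A_{\xi'}$, giving $L=A^-_{\xi'}$. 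Second, your unimodularity relation has the wrong orientation: applying \cite{harashita1}, Lemma~5.6 to two adjacent $1$'s of $B=A_{m_2,n_2}$ yields $bm_2-an_2=1$, not $an_2-bm_2=1$. In the example above $\rho=(1,1)$, so $bm_2-an_2=4-3=1$ while $an_2-bm_2=-1$; the solution of $an_2-bm_2=1$ there, namely $(a,b)=(3,2)$, is not the cycle the construction produces. The sign is not cosmetic: $|an_2-bm_2|=1$ is only the unsigned area-one statement, and it is the inequality $b/a>n_2/m_2$ encoded in $bm_2-an_2=1$ that places $\rho$ above the segment $(m_2,n_2)$ as in the picture and singles out the $\rho$ of the statement. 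With these two corrections, and the cycle-splitting verification actually carried out, your plan coincides with the paper's proof.
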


\begin{proof}
In this hypothesis, we have $0^A_{m_1+1} = 0^A_{n_1}$.
The sequence $A_\xi^-$ is obtained by exchanging $0^A_{n_1}$ and $1^B_1$.
Then we obtain the same $(F, V^{-1})$-diagram as ($\star$). 
For the sequence $A_\xi^-$:
\[
1^A_1\cdots 1^A_{m_1}\ 1^B_1\ 
\underline{0^A_{n_1}\ 1^B_2}\cdots 1^B_{n_2}\ 0^A_{n_1+1}
\cdots 0^A_{m_1+n_1}\ 1^B_{n_2+1}\cdots 1^B_{m_2}\ 0^B_{m_2+1}
\cdots 0^B_{m_2+n_2},
\]
to construct $A_\xi^{--}$, we exchange $0^A_{n_1}$ and $1^B_2$.
Then $A_\xi^{--}$ consists of two cycles.
The ${\rm DM_1}$ $N_{\xi}^{--}$ is described as follows:\\
$$\xymatrix{\ar@{}[d]|{\normalsize{\mbox{$N^{--}_\xi :$}}}
&\bullet \ar[d]
&\cdots \ar[l] 
&0^A_{n_1}\ar[l] 
&0^B_{m_2+1} \ar@{.>}[l] |\times \ar[d] 
&\cdots \ar[l] 
&1^B_{n_2} \ar[l] \\ 
&\bullet \ar[r]  
&\cdots \ar[r] 
&0^B_{m_2+2} \ar@{.>}[r] |\times \ar[u] 
&1^B_2 \ar[r]
&\cdots \ar[r] 
&0^B_{m_2+n_2}.\ar[u]}$$
Let $C_1$ (resp. $C_2$) be the cycle of $A_\xi^{--}$
containing $0^A_{n_1}$ (resp. $1^B_2$).
Since the cycle $C_2$ consists of some terms of $B$,
this cycle corresponds to a minimal ${\rm DM_1}$ 
$N_\rho$ with $\rho = (a, b)$.
Since this cycle coincides with the cycle obtained from $B$ 
the adjacent $1^B_1\ 1^B_2$, by
\cite{harashita1}, Lemma~5.6,
we have $b m_2 - an_2= 1$.
Let us see that $C_1$ corresponds to $N_{\xi'}^-$
for a Newton polygon $\xi'$.
Let $L$ be the sequence corresponding to $C_1$,
and let $L'$ be the sequence constructed by exchanging 
$1^B_1$ and $0^A_{n_1}$ in $L$. 
Then we have
$$L' = \underbrace{1^A_1\cdots 1^A_{m_1}}_{m_1}\ 
0^A_{n_1}\ \underbrace{1^B_1\cdots 1^B}_{n_2-b}\ 
\underbrace{0^A_{n_1+1}\cdots 0^A_{m_1+n_1}}_{m_1}\ 
\underbrace{1^B\cdots 1^B}_{(m_2-a)-(n_2-b)}\ 
\underbrace{0^B\cdots 0^B}_{n_2-b}.$$
It implies that $L'$ is associated with $N_{\xi'}$ 
for $\xi' = (m_1, n_1) + (m_2-a, n_2-b)$.
\end{proof}

Let us see an example of this specialization.

\begin{example}
We consider $N_{\xi}=N_{2,3}\oplus N_{4,3}$.
By Lemma~\ref{lem1}, the sequence of  $N_{\xi}$ is
$$A_{\xi}=1^A_1\ \ 1^A_2\ \ 0^A_3\ \ 1^B_1\ \ 1^B_2\ \ 
1^B_3\ \ 0^A_4\ \ 0^A_5\ \ 1^B_4\ \ 0^B_5\ \ 0^B_6\ \ 0^B_7.$$
By exchanging $0^A_3\ 1^B_1$ for $1^B_1\ 0^A_3$, we obtain $A_\xi^-$.
Moreover, by exchanging $0^A_3\ 1^B_2$ for $1^B_2\ 0^A_3$,
we obtain $A_\xi^{--}$.
The sequence associated with $N_\xi^{--}$ can be decomposed 
into two cycles as follows.\\ 
\\
\begin{equation}\label{dgm}
N^{--}_{\xi} : \xymatrix@=10pt{1\ar@/_20pt/[rrrrr] &1\ar@/_20pt/[rrrrr] 
&1\ar@/_20pt/[rrrrr] &0\ar@/_20pt/[lll] &1\ar@/_20pt/[rrrr] 
&0\ar@/_20pt/[llll] &0\ar@/_20pt/[llll] &1\ar@/_20pt/[rr] &0\ar@/_20pt/[lllll]
&0\ar@/_20pt/[lllll]}\ 
\oplus \ 
\xymatrix@=10pt{1\ar@/_20pt/[r] &0\ar@/_20pt/[l]} \tag{a}
\end{equation}
\\
Let $\xi'=(2,3)+(3,2)$. 
We have then
$A_{\xi'}=1\ 1\ 0\ 1\ 1\ 0\ 0\ 1\ 0\ 0$ and
$A^-_{\xi'}=1\ 1\ 1\ 0\ 1\ 0\ 0\ 1\ 0\ 0$.
One verifies that $A^-_{\xi'}$ coincides with the left direct summand of \eqref{dgm}.
\end{example}

The final proposition treats the remaining case:

\begin{proposition}\label{prop3}
We have
\begin{itemize}
\item[(1)] If $m_1 = n_2 = 0$, then $N^-_{\xi}=N_{1,1}$;
\item[(2)] If $m_1 = 0$ and $n_2 = 1$, then $N^{--}_\xi = N_{1,1} \oplus N_{m_2-1, 1}$;
\item[(3)] If $m_1 = 1$ and $n_2 = 0$, then $N^{--}_\xi = N_{1, n_1-1} \oplus N_{1,1}$.
\end{itemize}
\end{proposition}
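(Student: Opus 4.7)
The plan is to handle each of the three cases by a direct, explicit computation that parallels but is simpler than Propositions~\ref{prop1} and~\ref{prop2}. First I would unpack the parameters: combining $\gcd(m_i,n_i)=1$ with the hypothesis $\lambda_2 < 1/2 < \lambda_1$ and each case's additional assumption forces the Newton polygons to take a very restricted form. In~(1) we must have $(m_1,n_1)=(0,1)$ and $(m_2,n_2)=(1,0)$; in~(2), $(m_1,n_1)=(0,1)$ and $(m_2,n_2)=(m_2,1)$ with $m_2 \ge 2$; in~(3), $(m_1,n_1)=(1,n_1)$ with $n_1 \ge 2$ and $(m_2,n_2)=(1,0)$.

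For case~(1), Lemma~\ref{lem1} (with most bracketed blocks empty) gives $A_\xi = 0^A_1\ 1^B_1$, and one exchange immediately produces $A_\xi^- = 1\ 0$, which is by definition the sequence of $N_{1,1}$. For case~(2), Lemma~\ref{lem1} yields
\[
A_\xi = 0^A_1\ 1^B_1\ 1^B_2 \cdots 1^B_{m_2}\ 0^B_{m_2+1};
\]
the first ``$01$'' lies at positions $1,2$, and two successive exchanges produce
\[
A_\xi^{--} = 1^B_1\ 1^B_2\ 0^A_1\ 1^B_3\cdots 1^B_{m_2}\ 0^B_{m_2+1}.
\]
Reapplying the position-based formulas \eqref{DefOfF} and \eqref{DefOfV} to this permuted sequence, I would verify that the $(F,V^{-1})$-diagram splits into a $2$-cycle on $\{0^A_1, 1^B_1\}$ with subsequence $1\ 0 = A_{1,1}$, together with a cycle on $\{1^B_2,\ldots, 1^B_{m_2}, 0^B_{m_2+1}\}$ whose subsequence is $\underbrace{1\cdots 1}_{m_2-1}\ 0 = A_{m_2-1,1}$; this is precisely the claimed decomposition. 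For case~(3), the same strategy applied to $A_\xi = 1^A_1\ 0^A_2\cdots 0^A_{n_1+1}\ 1^B_1$ (whose only ``$01$'' lies at positions $n_1+1, n_1+2$, and whose $-$-transform then has its first ``$01$'' at positions $n_1, n_1+1$) produces
\[
A_\xi^{--} = 1^A_1\ 0^A_2\cdots 0^A_{n_1-1}\ 1^B_1\ 0^A_{n_1}\ 0^A_{n_1+1},
\]
and its $(F,V^{-1})$-diagram decomposes into a cycle on $\{1^A_1, 0^A_2, \ldots, 0^A_{n_1}\}$ of subsequence $1\ 0\cdots 0 = A_{1,n_1-1}$ and a $2$-cycle $\{1^B_1, 0^A_{n_1+1}\}$ of subsequence $A_{1,1}$.

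The main obstacle here is bookkeeping rather than a conceptual difficulty: when the exchanges shift positions, one must carefully reapply the position-based formulas \eqref{DefOfF} and \eqref{DefOfV} to the permuted sequence and track the vector labels consistently in order to confirm the cycle decomposition. No new ideas are required beyond those of Propositions~\ref{prop1}--\ref{prop2}; one can regard these three cases as degenerate limits in which the extra summand $\rho$ or the reduced polygon $\xi'$ collapses to the trivial form $(0,1)$ or $(1,0)$, which is why they were not covered by the earlier propositions but succumb to the same diagrammatic argument.
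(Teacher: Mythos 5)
Your proposal is correct and follows essentially the same route as the paper: in each case one writes out $A_\xi$ via Lemma~\ref{lem1}, performs the one or two exchanges, and reads off the cycle decomposition of the resulting $(F,V^{-1})$-diagram. The only (harmless) addition is that you make explicit the forced values $n_1=1$, $m_2=1$, $m_2\ge 2$, $n_1\ge 2$ coming from coprimality and $\lambda_2<1/2<\lambda_1$, which the paper leaves implicit.
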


\begin{proof}
In the case (1), $N_{\xi}$ is expressed by
\\
$$N_{\xi} : \xymatrix@=10pt{0\ar@(ur,ul) & 1\ar@(dl,dr)}.$$
\\
We construct $N^-_{\xi}$ by exchanging 0 and 1 in $N_{\xi}$, 
then we get
$$N^-_{\xi} : \xymatrix@=10pt{1\ar@/_15pt/[r] &0\ar@/_15pt/[l]}.$$
Hence $N^-_{\xi}=N_{1,1}$. 

In the case (2), we exchange the subsequence $0^A_1\ 1^B_1$
to construct $A_\xi^-$.
Moreover, we construct $A_\xi^{--}$ by exchanging 
the subsequence $0^A_1\ 1^B_2$ for $1^B_2\ 0^A_1$.
Then $A_\xi^{--}$ consists of two cycles.
The cycle containing $0^A_1$ is associated with $N_{1,1}$
since the sequence of this cycle is $1^B_1\ 0^A_1$.
Moreover, the other cycle is associated with $N_{m_2-1, 1}$.

In the case (3), The sequence $A_\xi^-$ is constructed by
exchanging $0^A_{h_1}\ 1^B_1$ for $1^B_1\ 0^A_{h_1}$
with $h_1 = m_1+n_1$.
We exchange the subsequence $0^A_{h_1-1}\ 1^B_1$
for $1^B_1\ 0^A_{h_1-1}$, and we construct $A_\xi^{--}$.
Then $A_\xi^{--}$ consists of two cycles.
The cycle containing $1^B_1$ is associated with $N_{1,1}$.
In fact, the sequence of this cycle is $1^B_1\ 0^A_{h_1}$.
The other cycle is associated with $N_{1, n_1-1}$.
This completes the proof.
\end{proof}

Finally, we see that Propositions~\ref{prop1}, \ref{prop2} and \ref{prop3}
imply Theorem~\ref{theorem1}.

\begin{proof}[Proof of Theorem~\ref{theorem1}]
It suffices to show the essential case that
$\zeta\prec\xi$ is saturated and $\xi$ consists of two segments.
Put $\xi=(m_1,n_1)+(m_2,n_2)$ and assume $\zeta\prec \xi$ is saturated.
Set $\lambda_1=n_1/(m_1+n_1)$ 
and $\lambda_2=n_2/(m_2+n_2)$.
The case that the slopes of $\xi$, say $\lambda_2 < \lambda_1$, 
satisfy $\lambda_2<\lambda_1 \leq 1/2$ and 
$1/2 \leq \lambda_2 < \lambda_1$
has been proved in \cite{harashita1}, 8.4.
As it suffices to prove the remaining case,
we assume $\lambda_2<1/2<\lambda_1$.
We claim that
there exist $A^{(1)}, \ldots ,A^{(c-2)}$ with $c=c(\zeta,\xi)$
such that $A_{\zeta} =: A^{(0)} < \cdots < A^{(c-2)} < A^{(c-1)}:=A^-_{\xi}$.
This claim implies the theorem by $A^-_{\xi} < A_{\xi}$.
Recall that for a sequence $S$ we denote by $S^-$ the sequence obtained by exchanging the first adjacent subsequence ``$0\ 1$" in $S$ for ``$1\ 0$",
and $S^{--}$ means $(S^-)^-$.
We use induction on $h=m_1+n_1+m_2+n_2$.
The claim is clear
when $h$ is smallest, i.e., $h=2$ with
$\zeta=(1,1) \prec \xi=(0,1)+(1,0)$,
see the case of $m_1=n_2=0$ in Proposition \ref{prop3}. 
Assume $h>2$.
Note that $\xi$ with $h>2$ belongs to either of the following six cases.
\begin{enumerate}
\item[(i)] $n_1-m_1 > 1$ and $n_2 > 0$,
\item[(ii)] $n_1 - m_1 = 1$ with $m_1 > 0$ and $n_2 = 1$,
\item[(iii)] $n_2 = 0$ and $m_1 > 1$,
\item[(iv)] $n_1 - m_1 = 1$ and $n_2 > 1$,
\item[(v)] $m_1 = 0$ and $n_2 = 1$,
\item[(vi)] $m_1 = 1$ and $n_2 = 0$.
\end{enumerate}
In the case of (i), (ii) and (iii), 
we get $N^{--}_{\xi}=N^-_{\xi'}\oplus N_{\rho}$ by Proposition~\ref{prop1},
where $\zeta = \zeta' + \rho$ and $\zeta' \prec \xi'$ is saturated.
In the case of (iv), 
we get $N^{--}_{\xi}=N^-_{\xi'}\oplus N_{\rho}$ by Proposition~\ref{prop2},
where $\zeta = \zeta' + \rho$ and $\zeta' \prec \xi'$ is saturated.
By the hypothesis of induction, there exist
$B^{(1)}, \ldots, B^{(c-3)}$ such that
$A_{\zeta'}=:B^{(0)} < \cdots < B^{(c-3)} < B^{(c-2)}:= A_{\xi'}^-$.
Here we note that $c(\zeta', \xi') = c(\zeta, \xi)-1$
holds by Proposition~\ref{prop4}.
Hence putting $A^{(i)} = B^{(i)} \oplus A_{\rho}$ ($i=0,\ldots, c-2$), we have
\[
A_{\zeta} = A_{\zeta'}\oplus A_{\rho} 
= A^{(0)} < \cdots < A^{(c-2)} 
= A^-_{\xi'}\oplus A_{\rho} = A_\xi^{--} < A_\xi^-.
\]
In the case of (v) and (vi), 
we get $N^{--}_{\xi}=N_{\rho} \oplus N_{\rho'}$ by Proposition~\ref{prop3},
where $\zeta = \rho + \rho'$ and $\zeta \prec \xi$ is saturated,
whence we have $A_\zeta = A_\xi^{--} < A_\xi^-$.
Thus we have proved the claim for all the cases.
\end{proof}

\subsection*{Acknowledgments}
The authors thank the referee for careful reading and helpful comments.
This research is supported by
JSPS Grant-in-Aid for Scientific Research (C) 17K05196.

\end{document}